\tikzstyle{vertex}=[circle, draw, inner sep=0pt, minimum size=6pt]
\newcommand{\lebn}
\theoremstyle{plain}
\newtheorem{prop}[equation]{Proposition}
\newtheorem{thm}[equation]{Theorem}
\newtheorem{fact}[equation]{Fact}
\newtheorem{cor}[equation]{Corollary}
\newtheorem{lem}[equation]{Lemma}
\theoremstyle{definition}
\newtheorem{defn}[equation]{Definition}
\newtheorem{rem}[equation]{Remark}
\numberwithin{equation}{section}
\newcommand{\Z}{\mathbb{Z}}
\newcommand{\E}{\mathcal{E}}
\newcommand{\CE}{\mathcal{C}}
\newcommand{\DE}{\mathcal{D}}
\newcommand{\FE}{\mathcal{F}}
\newcommand{\IE}{\operatorname{Ind}}
\newcommand{\ce}{\operatorname{c}}
\newcommand{\lk}{\operatorname{link}}
\newcommand{\dl}{\operatorname{del}}
\newcommand{\connH}{\operatorname{conn_h}}
\newcommand{\conn}{\operatorname{conn}}
\newcommand{\hk}{\operatorname{k}}
\begin{document}

\bibliographystyle{plain}

\title[Bounds on the connectivity of the independence complexes of hypergraphs]{Bounds on the connectivity of the independence complexes of hypergraphs}
\author{Demet Taylan}
 \pagestyle{plain} 
\address{Department of Mathematics, Yozgat Bozok University, Yozgat, Turkey.}
\email{demet.taylan@bozok.edu.tr}

\keywords{Independence complex, topological connectivity, homotopy type, hypergraph, graph, simplicial complex.}

\date{\today}

\subjclass[2000]{05C65, 05C69, 55U10.}

\begin{abstract}
We provide lower bounds on the connectivity of the independence complexes of hypergraphs. Additionally, we compute the homotopy types of the independence complexes of $d$-uniform properly-connected triangulated hypergraphs.
\end{abstract}

\maketitle

\section{Introduction}
By a hypergraph $\CE$ on a finite vertex set $V$ (or $V(\CE)$) we mean a family of pairwise incomparable subsets (edges) of $V$ whose every member has cardinality at least $2$. We identify the set of edges with the hypergraph $\CE$ itself. A $d$-uniform hypergraph is a hypergraph each of whose edges has cardinality $d$. For example, a simple graph is a $2$-uniform hypergraph. A vertex $v$ of $\CE$ is called an isolated vertex if there exists no edge of $\CE$ containing the vertex $v$. 

A simplicial complex $\Delta$ on a finite vertex set $V$ is a collection of subsets (faces) of $V$ that is closed under taking subsets such that $\{x\}\in \Delta$ for every $x\in V$. There are several ways to associate a simplicial complex to a hypergraph. For example, independence complex $\IE(\CE)$ of a hypergraph $\CE$ is the simplicial complex whose faces are the independent sets of vertices of $\CE$, i.e., the sets which do not contain any edge of $\CE$. 

The topology of independence complexes has been studied by several authors. A specific interest has been attracted towards the connectivity properties of $\IE(\CE)$ in the study of various combinatorial problems~\cite{aharoni, Bj2, Engstrom, Lovasz, Mesh, Mesh2}. In particular, homotopical connectivity proves to be an effective tool for dealing with independent systems of representatives (ISR) (\cite{aharoniberger}, Theorem 2.1) and matchable pairs (\cite{aharoni}, Theorem 4.5). It is therefore important to determine the homotopy types of simplicial complexes or to give bounds on the connectivity of simplicial complexes. The homotopical connectivity $\conn(\Delta)$ of a simplicial complex $\Delta$ is defined to be the greatest integer $k$ such that $\pi_i(\Delta)=0$ for every $-1\leq i\leq k$. In the case where $\pi_i(\Delta)=0$ for all $i$, we write $\conn(\Delta) = \infty$. In particular, $\conn(\emptyset): = -2$. Similarly, homological connectivity $\connH(\Delta)$ can be defined by replacing the homotopy groups $\pi_i(\Delta)$ by the reduced homology groups with integer coefficients $\tilde{H}_{i}(\Delta)$.

When $\Delta$ is the independence complex $\Delta = \IE(G)$ of a simple graph $G$, Aharoni, Berger and Ziv~\cite{aharoniberger} propose a recursively defined number $\psi(G)$ for each simple graph $G$ which gives a lower bound for the homotopical connectivity number $\conn(\IE(G))$ and conjecture that this bound is optimal. Even if Kawamura~\cite{Kawamura} verifies the conjecture for chordal graphs, the general claim was disproved in~\cite{Adamaszekbarmak,barmak}.

Our aim here is to generalize these results on the homotopical connectivity from independence complexes of graphs to independence complexes of hypergraphs. If $\CE$ is a hypergraph and $F$ is an edge, there are two key hypergraphs $\CE - F$  and $\CE:F$ obtained from $\CE$. The hypergraph $\CE - F$ denotes the hypergraph obtained by removing the edge $F$ from $\CE$. On the other hand, $\CE:F$ is the hypergraph on $V\setminus (F\cup N_{\CE}(F))$ whose edges are given by the minimal sets of $\{E\setminus F\colon E\;\textrm{is an edge of}\;\CE - F\}$ with cardinality at least $2$. Here, the vertex neighbour set $N_{\CE}(F) \subseteq V$ of an edge $F$ of $\CE$ is defined by $$N_{\CE}(F): = \bigcup_{E\in \CE\;\textrm{with}\; |E\setminus F| =1}^{} {E\setminus F}.$$

We now consider a recursively defined number $\psi (\CE)\in \Z_{\geq}\cup\{\infty\}$, where $\Z_{\geq}$ denotes the set of all non-negative integers, satisfying the following:

\begin{equation*}
\psi(\CE):=
\begin{cases}
0, & \text{if $V= \emptyset$},\\
\infty, & \text{if $V\neq \emptyset$ and $\CE=\emptyset$},\\
\max_{F\in\CE}\{\min\{\psi(\CE- F),\psi(\CE: F) + |F| -1 \}\}, & \text{otherwise}.\\
\end{cases}
\end{equation*}

We prove that if $\CE$ is a hypergraph, then the number $\psi(\CE)$ provides a lower bound for the homotopical connectivity $\conn(\IE(\CE))$ of the independence complex $\IE(\CE)$.

\begin{thm}\label{thm: connectivity bound for any given hypergraph} If $\CE$ is a hypergraph, then we have $\conn(\IE(\CE)) \geq \psi(\CE) - 2$.
\end{thm}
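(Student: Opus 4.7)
The plan is to use strong induction on $|V(\CE)| + |\CE|$. The base cases are immediate: if $V(\CE) = \emptyset$, the bound $\conn(\IE(\CE)) \geq -2 = \psi(\CE) - 2$ holds by convention; if $\CE = \emptyset$ but $V(\CE) \neq \emptyset$, the complex $\IE(\CE)$ is the full simplex on $V(\CE)$, which is contractible, so both sides are $\infty$.

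For the inductive step, I would fix an edge $F^* \in \CE$ attaining the maximum in the definition of $\psi(\CE)$, so that $\psi(\CE) = \min\{\psi(\CE - F^*),\ \psi(\CE : F^*) + |F^*| - 1\}$, and aim for the topological estimate
\[
\conn(\IE(\CE)) \geq \min\bigl\{\conn(\IE(\CE - F^*)),\ \conn(\IE(\CE : F^*)) + |F^*| - 1\bigr\}. \qquad (\ast)
\]
Granting $(\ast)$, the conclusion follows by applying the inductive hypothesis to $\CE - F^*$ and $\CE : F^*$, both of which are strictly smaller in the induction parameter (the latter since $|V(\CE : F^*)| \leq |V(\CE)| - |F^*| \leq |V(\CE)| - 2$).

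To prove $(\ast)$, I set $X = \IE(\CE - F^*)$ and $Y = \IE(\CE)$ and exploit the fact that $F^*$ is itself a face of $X$: since edges of $\CE$ are pairwise incomparable, $F^*$ contains no edge of $\CE - F^*$ and is therefore independent in $\CE - F^*$. A direct check identifies $Y = \dl_X(F^*)$, and the standard simplicial decomposition yields $X = Y \cup \overline{\operatorname{st}}_X(F^*)$ with $Y \cap \overline{\operatorname{st}}_X(F^*) = \partial F^* * \lk_X(F^*)$ and $\overline{\operatorname{st}}_X(F^*) = F^* * \lk_X(F^*)$ contractible. The key combinatorial step is then to verify $\lk_X(F^*) = \IE(\CE : F^*)$: a set $\tau \subseteq V \setminus F^*$ satisfies $\tau \cup F^* \in X$ precisely when (i) $\tau \cap N_\CE(F^*) = \emptyset$, forced by edges $E \in \CE - F^*$ with $|E \setminus F^*| = 1$, and (ii) $E \setminus F^* \not\subseteq \tau$ for every $E \in \CE - F^*$ with $|E \setminus F^*| \geq 2$, which by minimality translates exactly to $\tau$ containing no edge of $\CE : F^*$.

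Once this identification is in hand, the join $\partial F^* * \IE(\CE : F^*)$ is homotopy equivalent to $\Sigma^{|F^*| - 1} \IE(\CE : F^*)$, so its connectivity is $\conn(\IE(\CE : F^*)) + |F^*| - 1$. The estimate $(\ast)$ then falls out of the Mayer--Vietoris long exact sequence for $X = Y \cup \overline{\operatorname{st}}_X(F^*)$: exactness combined with contractibility of the closed star yields $\connH(Y) \geq \min\{\connH(X),\ \connH(\partial F^* * \lk_X(F^*))\}$, which upgrades to homotopical connectivity via Hurewicz once enough simple-connectedness is in place, a condition that can be arranged through the induction (or handled separately in low-connectivity cases).

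The main obstacle I anticipate is the careful verification that $\lk_X(F^*) = \IE(\CE : F^*)$, which requires a delicate manipulation of the minimality clause in the definition of $\CE : F^*$, particularly to ensure that minimal sets of $\{E \setminus F^* : E \in \CE - F^*\}$ of cardinality at least $2$ automatically avoid $N_\CE(F^*)$ and thus live on the correct vertex set. A secondary technicality is the passage from the homological Mayer--Vietoris bound to the stated homotopical connectivity bound, which I would handle via standard Hurewicz-type arguments under the connectivity hypotheses maintained by the induction.
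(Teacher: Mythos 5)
Your proposal follows essentially the same route as the paper: the decomposition $\IE(\CE - F) = \IE(\CE) \cup (\overline{F} * \IE(\CE:F))$ with intersection $\partial F * \IE(\CE:F) \simeq \Sigma^{|F|-1}\IE(\CE:F)$, Mayer--Vietoris for the homological bound, and a Hurewicz upgrade after establishing simple-connectedness, which the paper obtains for $\psi(\CE)\geq 3$ by an inductive Van Kampen argument on the same decomposition, with the cases $\psi(\CE)\leq 2$ handled directly. The plan is correct and matches the paper's argument.
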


We next give a homotopical connectivity result on the independence complex of a hypergraph in terms of the order and the maximal degree. This generalizes the same result on graphs given by Engstr\"om~\cite{En} to hypergraphs. If $\CE$ is a hypergraph on $V$, the degree $d_\CE (v)$ of a vertex $v$ in $\CE$ is the number of edges containing the vertex $v$ and the maximal degree is denoted by $\Delta(\CE)$. For convention, we set $\Delta (\CE) = 1$ if $V=\emptyset$.

\begin{cor}\label{cor: homotopic connectivity bound depending vertex number} If $\CE$ is a hypergraph with $n$ vertices and maximal degree $\Delta(\CE)$, then $\IE(\CE)$ is $\left \lfloor{\frac{n-1}{2\Delta(\CE)}-1}\right \rfloor$-connected.
\end{cor}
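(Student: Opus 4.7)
The strategy is to combine Theorem~\ref{thm: connectivity bound for any given hypergraph} with an arithmetic lower bound on $\psi$. It suffices to prove that, for every hypergraph $\CE$ on $n$ vertices,
\begin{equation*}
\psi(\CE) \geq \left\lfloor \frac{n-1}{2\Delta(\CE)} + 1 \right\rfloor,
\end{equation*}
since subtracting two and invoking Theorem~\ref{thm: connectivity bound for any given hypergraph} yields exactly $\conn(\IE(\CE)) \geq \lfloor (n-1)/(2\Delta(\CE)) - 1 \rfloor$. I would prove this inequality by strong induction on the number of edges $|\CE|$, with base cases handled directly from the definition of $\psi$: when $V = \emptyset$ both sides equal $0$, and when $V \neq \emptyset$ with $\CE = \emptyset$ the left-hand side is $\infty$.

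For the inductive step, pick any edge $F \in \CE$ and verify the bound on both arguments of the recursive $\min$. Since $\CE - F$ has the same vertex set, strictly fewer edges, and maximal degree at most $\Delta(\CE)$, the inductive hypothesis gives $\psi(\CE - F) \geq \lfloor (n-1)/(2\Delta(\CE)) + 1 \rfloor$ directly. The substance of the argument lies in handling $\psi(\CE : F) + |F| - 1$.

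The key combinatorial ingredient is the estimate
\begin{equation*}
|F \cup N_\CE(F)| \leq |F| \cdot \Delta(\CE).
\end{equation*}
Indeed, each vertex of $F$ lies in at most $\Delta(\CE) - 1$ edges of $\CE$ other than $F$, so double-counting the incidences $(E,v)$ with $v \in F \cap E$ shows that at most $|F|(\Delta(\CE) - 1)$ edges $E$ of $\CE$ satisfy $|E \setminus F| = 1$, and each such edge contributes precisely one vertex to $N_\CE(F)$; hence $|N_\CE(F)| \leq |F|(\Delta(\CE)-1)$ and the displayed bound follows.

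Now write $n' := n - |F \cup N_\CE(F)| \geq n - |F|\Delta(\CE)$ for the vertex count of $\CE : F$, and recall $\Delta(\CE : F) \leq \Delta(\CE)$. Since $|\CE : F| \leq |\CE| - 1$, the inductive hypothesis applied to $\CE : F$, together with $|F| \geq 2$ and the identity $\lfloor a \rfloor + k = \lfloor a + k \rfloor$ for integer $k$, gives
\begin{equation*}
\psi(\CE : F) + |F| - 1 \geq \left\lfloor \frac{n' - 1}{2\Delta(\CE)} + |F| \right\rfloor \geq \left\lfloor \frac{n-1}{2\Delta(\CE)} + \frac{|F|}{2} \right\rfloor \geq \left\lfloor \frac{n-1}{2\Delta(\CE)} + 1 \right\rfloor.
\end{equation*}
The recursive definition of $\psi$ then delivers the required bound. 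The only real obstacle is the degree-counting estimate above; the surrounding floor arithmetic collapses cleanly because the denominator $2\Delta(\CE)$ is calibrated so that $|F|/2 \geq 1$ precisely absorbs the vertex loss $|F|\Delta(\CE)$.
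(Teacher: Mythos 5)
Your proposal is correct and follows essentially the same route as the paper: both reduce the corollary to the inequality $\psi(\CE) \geq \left\lfloor \frac{n-1}{2\Delta(\CE)}+1 \right\rfloor$ via Theorem~\ref{thm: connectivity bound for any given hypergraph}, and both establish it by the key estimate $|N_{\CE}(F)| \leq |F|(\Delta(\CE)-1)$ followed by the same floor arithmetic (the paper merely packages the induction through Lemma~\ref{lem: hereditary value of a hypergraph} on inductive functions rather than running it directly). Your explicit double-counting justification of the neighbourhood bound is a detail the paper leaves implicit.
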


The independence complexes have also attracted interest from algebraic points of view with respect to the edge ideals of hypergraphs. For instance, H\`a and van Tuyl~\cite{Tuyl,Tuyl2} explore connections between the algebraic invariants of edge ideals of hypergraphs encoded in their minimal free resolutions and the combinatorial properties of the underlying hypergraphs. In particular, H\`a and van Tuyl~\cite{Tuyl,Tuyl2} introduce $d$-uniform properly-connected triangulated hypergraphs for which the graded Betti numbers of their edge ideals are completely resolved recursively in terms of their subhypergraphs. In fact, $d$-uniform properly-connected triangulated hypergraphs generalize chordal graphs. When $G$ is a chordal graph, the bound $\psi(G) -2$ is optimal on the homotopical connectivity number of the independence complex $\IE(G)$. We generalize this result from chordal graphs to $d$-uniform properly-connected triangulated hypergraphs. More precisely, we prove that the bound $\psi(\CE) -2$ is optimal on the homotopical connectivity of the independence complex of a $d$-uniform properly-connected triangulated hypergraph $\CE$.

H\`a and van Tuyl~\cite{Tuyl,Tuyl2} further prove that the regularity of the edge ideal of a $d$-uniform properly-connected triangulated hypergraph is $(d-1)c_{\CE}+1$, where $\ce_{\CE}$ denotes the maximum number of pairwise $(d + 1)$-disjoint edges of the hypergraph. We prove that if $\CE$ is a $d$-uniform properly-connected triangulated hypergraph, then its independence complex $\IE(\CE)$ is either contractible or is homotopy equivalent to a wedge of spheres of dimension at most $(d - 1)\ce_{\CE} - 1$. Recall that a set of edges $\E\subseteq \CE$ is called pairwise $t$-disjoint if the distance $dist_{\CE}(F, G)$ between $F$ and $G$ in $\CE$, that is the length of the shortest path between $F$ and $G$ in $\CE$, satisfies $dist_{\CE}(F, G) \geq t$ for any two edges $F, G \in \E$. 

\begin{thm}\label{thm: triangulatd hypergraphs and homotopy type}
 Let $\CE$ be a $d$-uniform properly-connected triangulated hypergraph on $V$. Then, we have the following:

\begin{enumerate}[label=(\roman*)] 
\item There exists an edge $F$ of $\CE$ such that $\connH(\IE(\CE : F))\geq \connH(\IE(\CE)) - |F| + 1$ holds.
\item The independence complex $\IE(\CE)$ is either contractible or is homotopy equivalent to a wedge of spheres of dimension at most $(d - 1)\ce_{\CE} - 1$, where $\ce_{\CE}$ denotes the maximum number of pairwise $(d + 1)$-disjoint edges of $\CE$.
\end{enumerate}
\end{thm}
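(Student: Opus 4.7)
My plan is induction on $|V(\CE)|$, leveraging the recursive structure of $d$-uniform properly-connected triangulated hypergraphs due to H\`a and van Tuyl. That theory guarantees, for any non-empty such $\CE$, a distinguished ``simplicial-type'' edge $F$ such that $\CE - F$ and $\CE : F$ are again $d$-uniform properly-connected triangulated hypergraphs of smaller size, and $F$ behaves like a generalized simplicial vertex --- in the strongest form, $\IE(\CE - F)$ collapses onto a much smaller complex and is in particular significantly more connected than $\IE(\CE)$ itself.

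The topological engine is the following decomposition. Since $F$ is no longer an edge in $\CE - F$, it becomes a face of $\IE(\CE - F)$. A direct computation using the definitions of $N_\CE(F)$ and $\CE : F$ identifies
\begin{equation*}
\lk_{\IE(\CE - F)}(F) = \IE(\CE : F), \qquad \operatorname{star}_{\IE(\CE - F)}(F) = F * \IE(\CE : F),
\end{equation*}
so the closed star is a contractible cone, and $\IE(\CE - F) = \IE(\CE) \cup \operatorname{star}_{\IE(\CE - F)}(F)$ with intersection $\partial F * \IE(\CE : F) \simeq \Sigma^{|F|-1} \IE(\CE : F)$. This yields the cofiber sequence
\begin{equation*}
\Sigma^{|F|-1} \IE(\CE : F) \longrightarrow \IE(\CE) \longrightarrow \IE(\CE - F),
\end{equation*}
and its Mayer--Vietoris long exact sequence in reduced homology. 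For part (i), I take $F$ to be the distinguished simplicial-type edge; under the connectivity boost $\connH(\IE(\CE - F)) \geq \connH(\IE(\CE)) + 1$ (the homological content of $F$ being generalized simplicial), chasing low degrees in the long exact sequence immediately forces $\connH(\IE(\CE : F)) \geq \connH(\IE(\CE)) - |F| + 1$.

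For part (ii), I induct on $|V(\CE)|$. The base case $V = \emptyset$ gives a contractible full simplex. For the inductive step, choose $F$ as in (i); by induction, $\IE(\CE - F)$ and $\IE(\CE : F)$ are each contractible or wedges of spheres. In the strongest form of the structural theorem, $\IE(\CE - F)$ is contractible and the cofiber sequence forces $\IE(\CE) \simeq \Sigma^{|F|-1} \IE(\CE : F)$; more generally, a gluing/splitting argument --- using that the attaching map from a highly connected source into a wedge of spheres splits up to homotopy --- gives $\IE(\CE) \simeq \IE(\CE - F) \vee \Sigma^{|F|-1} \IE(\CE : F)$. Since $\ce_{\CE : F} \leq \ce_\CE - 1$ (forming $\CE : F$ removes $F$ along with its neighborhood) and $\ce_{\CE - F} \leq \ce_\CE$, suspending the $\IE(\CE : F)$-summand by $|F| - 1 = d - 1$ produces spheres of dimension at most $(d-1)(\ce_\CE - 1) - 1 + (d - 1) = (d-1)\ce_\CE - 1$, matching the required bound.

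The main obstacle will be extracting from the H\`a--van Tuyl structural theory a single ``simplicial'' edge $F$ that simultaneously supports the connectivity boost on $\IE(\CE - F)$ and the decrement $\ce_{\CE : F} \leq \ce_\CE - 1$, while preserving the properly-connected triangulated property on both $\CE - F$ and $\CE : F$; a secondary difficulty is promoting the homological Mayer--Vietoris identities to genuine homotopy-theoretic wedge decompositions at the level of spaces, which typically requires either an explicit collapsing sequence on $\IE(\CE - F)$ or a connectivity argument ensuring the relevant attaching maps are null-homotopic.
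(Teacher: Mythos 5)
Your Mayer--Vietoris setup coincides with the paper's Theorem~\ref{thm: exact sequence of homology groups-flagization}, but both halves of your argument rest on inputs you never establish, and these are exactly the hard points. For (i), everything hinges on the ``connectivity boost'' $\connH(\IE(\CE-F))\geq\connH(\IE(\CE))+1$, which you assert as the homological content of $F$ being ``generalized simplicial'' but do not prove; it cannot even hold literally when $\IE(\CE)$ is contractible, and by Lemma~\ref{lem: relationship between connectivities of CE and CE: F} the strict boost is essentially a reformulation of the inequality in (i) that you are trying to prove, so this step is close to circular. For (ii), the splitting $\IE(\CE)\simeq\IE(\CE-F)\vee\Sigma^{|F|-1}\IE(\CE:F)$ requires the attaching map to be null-homotopic, which you flag as a difficulty but leave open. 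Two further problems: your induction is on $|V(\CE)|$, yet $\CE-F$ has the same vertex set as $\CE$, so the inductive hypothesis does not apply to $\IE(\CE-F)$; and the inequality $\ce_{\CE-F}\leq\ce_{\CE}$ is not clear, since deleting an edge can only increase distances and hence can increase the number of pairwise $(d+1)$-disjoint edges.

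The paper sidesteps all of this by not working one edge at a time. It takes the special vertex $v$ guaranteed by the triangulated property ($\CE_{N(v)}$ is $d$-complete and $v$ occurs at most twice in any proper irredundant chain) and writes $\IE(\CE)$ as the union of the contractible star of $v$ with the contractible cones $\IE(\CE_I)*\lk_{\IE(\CE)}(I)$ over all $I$ with $I\cup\{v\}\in\CE$; the pairwise intersections lie in the star of $v$, so Theorem~\ref{union of complexes and homotopy} yields in one step $\IE(\CE)\simeq\bigvee_{I}\Sigma\bigl(\mathbb{S}^{|I|-2}*\lk_{\IE(\CE)}(I)\bigr)$ with $\lk_{\IE(\CE)}(I)=\IE(\CE:I\cup\{v\})$. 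This exhibits $\Sigma^{|F|-1}\IE(\CE:F)$ as an honest wedge summand of $\IE(\CE)$, from which (i) is immediate (its homology is a direct summand), and (ii) follows by induction applied only to the hypergraphs $\CE:F$, whose vertex sets genuinely shrink and for which $\ce_{\CE:F}\leq\ce_{\CE}-1$ is Theorem~\ref{thm: some properties of d-uniform properly connected hypergraphs}(iv). To salvage your outline you would have to prove the null-homotopy of the attaching map; the star-cluster decomposition is precisely the device that makes that unnecessary.
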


Our paper is organized as follows. In Section $2$, we review the background on hypergraphs and simplicial complexes, as well as necessary topological background. In Section $3$, we provide lower bounds on the connectivity number of the independence complexes of hypergraphs. We introduce properly-splitted hypergraphs and, in Section $4$, we compute the homotopy types of independence complexes of $d$-uniform properly-connected triangulated hypergraphs and show that properly-connected triangulated hypergraphs satisfy the property of being properly-splitted.

\section{Preliminaries}\label{pre}

\subsection{Hypergraphs}
A subhypergraph of a hypergraph $\CE$ on $V$ is the hypergraph in which each vertex and edge edge is contained in $V$ and $\CE$, respectively. For a given hypergraph $\CE$ on $V$ and a subset $A\subseteq V$, the~\emph{induced subhypergraph on} $A$ is the subhypergraph $\CE_A$ on $A$ with $\CE_A = \{E\in \CE\colon E\subseteq A\}$. The~\emph{$d$-complete hypergraph of order $n$} is the hypergraph in which the edges are the $d$-subsets of its vertex set $V$, where $|V| =n$. If $n< d$, a $d$-complete hypergraph of order $n$ is considered to be the hypergraph with $n$ isolated points.

If $\CE$ is a hypergraph on $V$, two distinct vertices $x,y\in V$ are said to be neighbours if there is an edge $F$ containing both the vertices $x$ and $y$. The neighbourhood of a vertex $x$ of a hypergraph $\CE$ on $V$ is the set $N(x) = \{y\in V\colon y\;\textrm{is a neighbour of} \; x\}$ (or $N_{\CE}(x)$). For a given hypergraph $\CE$ on $V$, a subfamily $\FE\subseteq \CE$ of a hypergraph $\CE$ is called~\emph{edgewise dominant} if a non-isolated vertex in $V$ is either contained in some edge of $\FE$ or has a neighbour contained in some edge of $\FE$ and $\epsilon (\CE)$ is given by $\epsilon (\CE) = min \{|\FE|\colon \FE \subseteq \CE\; \textrm{is edgewise dominant}\}$.

Given two edges $F$ and $G$ of a hypergraph $\CE$ with $|F| \geq |G|$, a~\emph{proper chain} is a sequence $(E_0 = F ,x_1,E_1,x_2,\dots, x_n, E_n = G)$, also denoted by $(E_0 = F ,E_1 ,\dots, E_n = G)$ if no confusion can arise, where $x_i$'s are distinct vertices and $E_j$'s are distinct edges of $\CE$ such that $x_1\in E_0$, $x_n\in E_n$, and $x_k,x_{k+1}\in E_k$ for $k = 1,\dots,n-1$ with $|E_i\cap E_{i+1}| = |E_{i+1}| -1$ for $i =0,\dots,n-1$. Moreover, the proper chain is called a~\emph{proper irredundant chain} if none of its subsequences is a proper chain from $F$ to $G$ and the~\emph{distance} $dist_{\CE}(F, G)$ between the edges $F$ and $G$ is defined as the minimum lenght of a proper irredundant chain connecting $F$ to $G$; i.e. $dist_{\CE}(F, G) = min \{l\colon (E_0 = F ,E_1 ,\dots, E_l = G) \;\textrm{is a proper irredundant chain}\}$. A $d$-uniform hypergraph is called a~\emph{properly-connected} hypergraph if for any two edges $F$ and $G$ whose intersection is non-empty, $dist_{\CE}(F, G) = d - |F\cap G|$ holds. Furthermore, if $\CE$ is a $d$-uniform properly-connected hypergraph, a set of edges $\E\subseteq \CE$ is called~\emph{pairwise $t$-disjoint} if $dist_{\CE}(F, G) \geq t$ for any two edges $F, G \in \E$.

Recall that an edge $F$ of a $d$-uniform properly-connected hypergraph $\CE$ on $V$ is called a~\emph{splitting edge} if there exists a vertex $z\in F$ such that $(F \setminus \{z\}) \cup \{x_i\} \in \CE$ for each $x_i\in N_{\CE}(F) = \{x_1,x_2,x_3,\dots, x_t\}$. H\`a and van Tuyl~\cite{Tuyl} provide the following relabeling lemma.

\begin{lem}\cite{Tuyl}\label{lem: d-uniform properly connected hypergraphs and relabeling} Let $\CE$ be a $d$-uniform properly-connected hypergraph. Let $F = E_0 = \{x_1,\dots, x_d\}$ and $G$ be any of its two edges satisfying $dist_{\CE}(F, G) = t \leq d$. Then, after relabeling, there exist edges $E_1, \dots, E_t$ such that $E_i = \{y_1, \dots, y_i, x_{i+1}, \dots, x_d\}$, $E_t = G$ and $y_i\notin E_j$ for any $j < i$.
\end{lem}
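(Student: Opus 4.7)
The plan is to proceed by induction on $i \in \{1, \ldots, t\}$, after fixing a proper irredundant chain $(E_0, E_1, \ldots, E_t = G)$ that realizes $dist_{\CE}(F, G) = t$. The inductive invariant is that, after relabeling the vertices of $E_0$ as needed and choosing fresh labels $y_1, \ldots, y_{i-1}$, one has $E_k = \{y_1, \ldots, y_k, x_{k+1}, \ldots, x_d\}$ for every $k \leq i - 1$, with the $y$'s pairwise distinct and all lying outside $\{x_1, \ldots, x_d\}$. Since $|E_{i-1} \cap E_i| = d - 1$, the sets $E_{i-1} \setminus E_i$ and $E_i \setminus E_{i-1}$ are singletons $\{w\}$ and $\{y_i\}$; extending the invariant to step $i$ reduces to showing that $w \in \{x_i, \ldots, x_d\}$ and that $y_i$ is fresh (i.e., $y_i \notin \{x_1, \ldots, x_d, y_1, \ldots, y_{i-1}\}$), after which a permutation of the labels $x_i, \ldots, x_d$ arranges $w = x_i$.

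The key estimate is the telescoping inequality
\begin{equation*}
|E_0 \cap E_k| \geq |E_0 \cap E_{k-1}| - 1,
\end{equation*}
valid at every transition since only one vertex is deleted per step. Under the inductive invariant the bound is an equality through step $i - 1$, yielding $|E_0 \cap E_{i-1}| = d - i + 1$. In the ``good'' configuration for step $i$ (a vertex of $E_0 \cap E_{i-1}$ is deleted and a fresh vertex outside $E_0$ is inserted) this equality persists and gives $|E_0 \cap E_i| = d - i$. However, each of the two bad alternatives -- namely $w \in \{y_1, \ldots, y_{i-1}\}$, in which case no vertex of $E_0$ is deleted at step $i$, or $w \in \{x_i, \ldots, x_d\}$ with $y_i \in \{x_1, \ldots, x_{i-1}\}$, in which case a vertex of $E_0$ is re-inserted -- fails to decrease $|E_0 \cap \cdot|$, so that
\begin{equation*}
|E_0 \cap E_i| \geq d - i + 1.
\end{equation*}

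Iterating the telescoping bound from step $i$ through step $t$ then gives $|E_0 \cap E_t| \geq (d - i + 1) - (t - i) = d - t + 1 \geq 1$, where the last inequality uses $t \leq d$. In particular $E_0 \cap E_t$ is non-empty, so the defining formula for properly-connected hypergraphs yields $dist_{\CE}(F, G) = d - |E_0 \cap E_t| \leq t - 1$, contradicting the choice $dist_{\CE}(F, G) = t$. Hence neither bad alternative is possible, and the induction extends through $i = t$, giving the claimed relabeling. The main obstacle will be the clean case check at step $i$ verifying that each bad configuration fails to decrease $|E_0 \cap \cdot|$ by one; the freshness constraint $y_i \notin E_{i-1}$ already forces $y_i \neq y_k$ for $k < i$ and $y_i \neq x_l$ for $l \geq i$, so the only non-trivial possibility to rule out for $y_i$ is that it equals some previously-deleted $x_l$ with $l < i$.
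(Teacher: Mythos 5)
The paper does not prove this lemma at all --- it is quoted verbatim from H\`a and van Tuyl \cite{Tuyl} and used as a black box --- so there is no internal proof to compare against; your proposal supplies a self-contained argument, and it is correct. The mechanism is sound: fixing a length-$t$ proper chain realizing the distance, the single-swap structure $|E_{k-1}\cap E_k|=d-1$ gives the telescoping bound $|E_0\cap E_k|\geq |E_0\cap E_{k-1}|-1$, and your case analysis at step $i$ is exhaustive (the deleted vertex $w$ of $E_{i-1}=\{y_1,\dots,y_{i-1},x_i,\dots,x_d\}$ is either some $y_k$ or some $x_l$ with $l\geq i$, and in the latter case the inserted vertex $y_i$, which cannot lie in $E_{i-1}$, is either outside $E_0$ or equal to some $x_l$ with $l<i$). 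Each bad alternative keeps $|E_0\cap E_i|\geq d-i+1$, which propagates to $|F\cap G|\geq d-t+1\geq 1$ and hence, by the properly-connected identity $dist_{\CE}(F,G)=d-|F\cap G|$, to $dist_{\CE}(F,G)\leq t-1$, contradicting the hypothesis; so the good configuration occurs at every step and a permutation of $x_i,\dots,x_d$ (which does not disturb the sets $E_k$ for $k<i$) completes the induction. You also correctly observe that the good case yields the full freshness condition $y_i\notin E_j$ for all $j<i$, since $E_j\subseteq E_0\cup E_{i-1}$ under the invariant. The only cosmetic caveat is a notational clash with the paper's definition of a proper chain, which also uses $x_1,x_2,\dots$ for the distinguished connecting vertices; your argument never needs those, only the cardinality condition on consecutive intersections, so this is harmless.
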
 H\`a and van Tuyl~\cite{Tuyl} prove that the property of being properly-connected is passed on to $\CE^{\geq}$:

\begin{lem}\cite{Tuyl}\label{lem: link is properly-connected hypergraph} For any edge $F$ of a $d$-uniform properly-connected hypergraph $\CE$ on $V$, the subhypergraph $\CE^{\geq} = \{G\in \CE\colon dist_{\CE}(F, G) \geq d+1\}$ is also a $d$-uniform properly-connected hypergraph.
\end{lem}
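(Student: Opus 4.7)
The plan is to verify that $\CE^{\geq}$ satisfies the distance formula of a properly-connected hypergraph: $dist_{\CE^{\geq}}(G_1, G_2) = d - |G_1 \cap G_2|$ for every pair $G_1, G_2 \in \CE^{\geq}$ whose intersection is non-empty. Since $\CE^{\geq}$ inherits $d$-uniformity from $\CE$ and the lower bound $dist_{\CE^{\geq}}(G_1, G_2) \geq dist_{\CE}(G_1, G_2) = d - |G_1 \cap G_2|$ is automatic (every chain in $\CE^{\geq}$ is a chain in $\CE$), the essential task is to exhibit a proper chain of the required length that lies entirely inside $\CE^{\geq}$. I would apply Lemma~\ref{lem: d-uniform properly connected hypergraphs and relabeling} to $G_1$ and $G_2$ inside $\CE$ to obtain, after a relabeling $G_1 = \{x_1, \ldots, x_d\}$, a proper chain $G_1 = E_0, E_1, \ldots, E_t = G_2$ with $t = d - |G_1 \cap G_2|$ and $E_i = \{y_1, \ldots, y_i, x_{i+1}, \ldots, x_d\}$, and then prove that each intermediate $E_i$ belongs to $\CE^{\geq}$.

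First I would observe that $F \cap G_1 = F \cap G_2 = \emptyset$: a non-empty intersection together with the properly-connected property of $\CE$ would give $dist_{\CE}(F, G_j) \leq d - 1$, contradicting $G_j \in \CE^{\geq}$. Because the relabeling in Lemma~\ref{lem: d-uniform properly connected hypergraphs and relabeling} places every $y_j$ in $G_2$ and every $x_j$ in $G_1$, each $E_i$ lies in $G_1 \cup G_2$, and hence $F \cap E_i = \emptyset$. A routine induction using $|E_k \cap E_{k+1}| = d - 1$ along any proper chain shows that a proper chain of length $s$ from $F$ to $E_i$ forces $|F \cap E_i| \geq d - s$, so the disjointness already implies $dist_{\CE}(F, E_i) \geq d$; the remaining task is to exclude the equality $dist_{\CE}(F, E_i) = d$.

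To rule out that equality, I would apply Lemma~\ref{lem: d-uniform properly connected hypergraphs and relabeling} a second time, this time to $F$ and $E_i$, obtaining a proper chain $F = F_0, F_1, \ldots, F_d = E_i$ together with a relabeling $F = \{x'_1, \ldots, x'_d\}$ for which $F_j = \{y'_1, \ldots, y'_j, x'_{j+1}, \ldots, x'_d\}$. The heart of the argument is to focus on the very first edge $F_1 = \{y'_1, x'_2, \ldots, x'_d\}$: the single swapped vertex $y'_1$ lies in $E_i \subseteq G_1 \cup G_2$, say $y'_1 \in G_1$, whereas $x'_2, \ldots, x'_d \in F$ cannot meet $G_1$, so $F_1 \cap G_1 = \{y'_1\}$. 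The properly-connected property of $\CE$ then gives $dist_{\CE}(F_1, G_1) = d - 1$ and $dist_{\CE}(F, F_1) = 1$, whence the triangle inequality yields $dist_{\CE}(F, G_1) \leq d$, contradicting $G_1 \in \CE^{\geq}$; the case $y'_1 \in G_2$ is symmetric. The main obstacle, I expect, will be pinpointing precisely which edge of the hypothetical short chain from $F$ to $E_i$ gives the contradiction, and the key insight is that the very first swapped vertex $y'_1$ is already forced to belong to $G_1 \cup G_2$, producing an immediate shortcut from $F$ to one of $G_1, G_2$.
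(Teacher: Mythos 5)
The paper offers no proof of this lemma --- it is quoted from H\`a--van Tuyl \cite{Tuyl} --- so your argument can only be measured against the standard one, which it essentially reconstructs correctly. The skeleton is right: $d$-uniformity and the lower bound $dist_{\CE^{\geq}}(G_1,G_2)\geq dist_{\CE}(G_1,G_2)$ are immediate, the relabeling lemma puts every intermediate edge $E_i$ inside $G_1\cup G_2$ (since each $y_j$ lies in $G_2=E_t$ and each $x_j$ in $G_1$), and the whole content is showing $dist_{\CE}(F,E_i)\geq d+1$. Your disjointness argument ($|F\cap F_{j+1}|\geq |F\cap F_j|-1$ along a chain, hence $dist_{\CE}(F,E_i)\geq d$) and the exclusion of equality via a second application of the relabeling lemma are both sound. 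Two remarks. First, your final step invokes the triangle inequality $dist_{\CE}(F,G_1)\leq dist_{\CE}(F,F_1)+dist_{\CE}(F_1,G_1)$ for a distance defined as the minimum length of a proper \emph{irredundant} chain; concatenating two such chains need not directly yield one (edges or vertices may repeat), so strictly one should argue that the concatenation contains a proper irredundant subchain. This is the same convention the paper itself uses freely (e.g.\ in the proof of Theorem 4.1(i)), so it is acceptable here, but it deserves a sentence. Second, the last step can be shortened: the argument in the paper's Theorem 4.1(i)--(ii) shows that an edge $C$ of $\CE$ satisfies $dist_{\CE}(F,C)\geq d+1$ exactly when $C\cap(F\cup N_{\CE}(F))=\emptyset$; since $E_i\subseteq G_1\cup G_2$ and both $G_1,G_2$ are disjoint from $F\cup N_{\CE}(F)$, one gets $E_i\in\CE^{\geq}$ at once, without the second relabeling. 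Your version buys self-containedness at the cost of a slightly longer case analysis; both are valid.
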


A $d$-uniform properly-connected hypergraph $\CE$ on $V$ is~\emph{triangulated} if the induced subhypergraph $\CE_A$  on any non-empty subset $A\subseteq V$ contains a vertex $v\in A$ such that the set $N_{\CE_A}(v)$ induces a $d$-complete hypergraph of order $|N_{\CE_A}(v)|$ and $v$ appears at most twice in any proper irredundant connected chain in $\CE_A$ (\cite{Tuyl2}).

\begin{thm}\label{thm: triangulated hypergraphs and a special splitting edge}\cite{Tuyl2} Let $\CE$ be a triangulated hypergraph on $V$ and let $v\in V$ be a vertex of $\CE$ such that the induced subhypergraph $\CE_{N(v)}$ is a $d$-complete hypergraph, and $v$ appears at most twice in any irredundant chain in $\CE$. Then, any edge $E$ of $\CE$ that contains the vertex $v$ is a splitting edge and the subhypergraphs $\CE - E$ and $\CE^{\geq} = \{G\in \CE\colon dist_{\CE}(E, G) \geq d+1\}$ are triangulated hypergraphs.
\end{thm}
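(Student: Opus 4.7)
The plan is to verify the three conclusions in order---(a) $E$ is a splitting edge, (b) $\CE - E$ is triangulated, (c) $\CE^{\geq}$ is triangulated---each leveraging the previous.

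For (a), I would take $z := v$ and check the splitting condition directly. Fix $y \in N_{\CE}(E)$; by the definition of $N_{\CE}(E)$ there is an edge $E' \in \CE$ of the form $E' = (E\setminus\{x\})\cup\{y\}$ for some $x \in E$. If $x = v$, then $E'$ is already the required edge $(E\setminus\{v\})\cup\{y\}$. Otherwise $v \in E'$, so $y$ is a neighbour of $v$, and every vertex of $E\setminus\{v\}$ is also a neighbour of $v$ via the edge $E$. Hence $(E\setminus\{v\})\cup\{y\}$ is a $d$-subset of $N(v)$, and the assumed $d$-completeness of $\CE_{N(v)}$ forces it to be an edge of $\CE$. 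Thus $z = v$ witnesses that $E$ is a splitting edge.

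For (b), the property of being properly-connected survives in $\CE - E$ by rerouting any proper irredundant chain that passes through $E$ via the edges $(E\setminus\{v\})\cup\{y\}$ supplied by (a). For the triangulated condition on $\emptyset\neq A\subseteq V$, I split according to whether $v \in A$. If $v \notin A$, then $E \not\subseteq A$, so $(\CE - E)_A = \CE_A$ and any simplicial vertex of $\CE_A$ (supplied by triangulatedness of $\CE$) transfers. If $v \in A$, I take $u := v$. The hypotheses of the theorem already make $v$ simplicial in $\CE_A$: every $d$-subset of $N_{\CE_A}(v) \subseteq N(v)$ is an edge by $d$-completeness of $\CE_{N(v)}$, and any proper irredundant chain in $\CE_A$ is one in $\CE$, so $v$ appears at most twice. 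Passing to $(\CE - E)_A$, the neighbourhood $N_{(\CE - E)_A}(v)$ remains a subset of $N(v)$; since $v \in E$ but no $d$-subset of $N(v)$ contains $v$, the deleted edge $E$ is never among those $d$-subsets, so $d$-completeness descends. The chain bound descends similarly because every proper irredundant chain in $(\CE - E)_A$ is a chain in $\CE_A$.

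For (c), Lemma \ref{lem: link is properly-connected hypergraph} supplies the properly-connected property of $\CE^{\geq}$. For the triangulated condition on $\emptyset\neq A\subseteq V(\CE^{\geq})$, note first that every vertex of an edge of $\CE^{\geq}$ is disjoint from $E$: an edge sharing a vertex with $E$ has distance at most $d - 1 < d + 1$ from $E$ by properly-connectedness. Using the triangulated vertex $u \in A$ of $\CE_A$, the chain condition transfers because every proper irredundant chain in $(\CE^{\geq})_A$ is a chain in $\CE_A$. For the $d$-complete neighbourhood condition I would invoke Lemma \ref{lem: d-uniform properly connected hypergraphs and relabeling} to convert the distance data into explicit vertex-exchange sequences and thereby show that every $d$-subset of $N_{(\CE^{\geq})_A}(u)$ is itself at distance at least $d + 1$ from $E$, hence lies in $\CE^{\geq}$. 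The main obstacle I anticipate is precisely this last step: controlling which $d$-subsets of $N_{(\CE^{\geq})_A}(u)$ remain edges after the distance threshold strips out all edges near $E$. I expect the interplay between the splitting property from (a) and the relabelling lemma to do the work---the former builds combinatorial bridges between edges of $\CE$ near $E$ and edges of $\CE^{\geq}$, while the latter converts distance constraints into step-by-step vertex exchanges that can be tracked explicitly.
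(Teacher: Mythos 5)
The paper does not prove this statement; it is quoted from H\`a and van Tuyl \cite{Tuyl2}, so there is no internal proof to compare against. On the merits: your part (a) is correct and complete --- the case split on whether the exchanged vertex $x$ equals $v$, followed by the observation that $(E\setminus\{v\})\cup\{y\}$ is a $d$-subset of $N(v)$ and hence an edge by $d$-completeness, is exactly the standard argument. Your transfer of the triangulated vertex condition to $(\CE-E)_A$ (the case split on $v\in A$, the fact that $E$ is never a $d$-subset of $N(v)$ because $v\notin N(v)$, and the observation that proper irredundant chains pass upward from $(\CE-E)_A$ to $\CE_A$ to $\CE$) is also sound.

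There are two genuine gaps. First, in (b) the assertion that $\CE-E$ remains properly-connected is the entire point of the splitting-edge hypothesis and cannot be dispatched with the word ``rerouting'': one must show that for every pair of intersecting edges $F,G\in\CE-E$ a proper irredundant chain of length $d-|F\cap G|$ avoiding $E$ exists, and verify that substituting an edge of the form $(E\setminus\{v\})\cup\{x_j\}$ for an occurrence of $E$ preserves the conditions $|E_i\cap E_{i+1}|=|E_{i+1}|-1$, the distinctness of the edges and of the chain vertices, and irredundancy; this is a substantive lemma in \cite{Tuyl}, not a one-line remark. Second, the step you yourself flag as the main obstacle in (c) is indeed missing, and you are attacking it from the wrong end. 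The efficient route --- recorded later in this paper as Theorem~\ref{thm: some properties of d-uniform properly connected hypergraphs}(i)--(iii) --- is to prove once and for all that $\CE^{\geq}=\CE_{V\setminus(E\cup N_{\CE}(E))}$, i.e.\ that an edge is at distance at least $d+1$ from $E$ if and only if it is disjoint from $E\cup N_{\CE}(E)$ (one direction via Lemma~\ref{lem: d-uniform properly connected hypergraphs and relabeling}, the other via the subadditivity bound $dist_{\CE}(E,C)\le d-|E\cap K|+d-|K\setminus(E\cap K)|$ for a witnessing edge $K$). Once $\CE^{\geq}$ is known to be an induced subhypergraph, $(\CE^{\geq})_A=\CE_A$ for every $A$ contained in its vertex set, and both the $d$-completeness of neighbourhoods and the chain condition are inherited verbatim from the triangulatedness of $\CE$; no tracking of distances of individual $d$-subsets of $N_{(\CE^{\geq})_A}(u)$ is needed.
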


\subsection{Simplicial Complexes}\label{preliminaries: simplicial complex}
If $\Delta$ is a simplicial complex on $V$ and $U\subset V$, then the complex $\Delta_U:=\{\sigma\colon \sigma\in \Delta\;\textrm{,}\;\sigma\subseteq U\}$ is called the \emph{induced subcomplex} by $U$. There are two particularly important subcomplexes associated with a simplicial complex. To be more precise, if $\sigma\in \Delta$ is a face of a simplicial complex $\Delta$, then the \emph{link} $\lk_\Delta(\sigma)$ and the \emph{deletion} $\dl_\Delta(\sigma)$ are defined respectively by $\lk_\Delta(\sigma)=\{\tau\in\Delta\colon\tau\cap \sigma=\emptyset\;\textrm{and}\;\tau\cup\sigma\in\Delta\}$ and $\dl_\Delta(\sigma)=\{\tau\in\Delta\colon\tau\nsupseteq\sigma\}$. If $x$ is a vertex of $\Delta$, we abbreviate $\dl_{\Delta}(\{x\})$ and $\lk_\Delta(\{x\})$ to $\dl_{\Delta}(x)$ and $\lk_{\Delta}(x)$, respectively. 

Let $\Delta$ be a simplicial complex on $V$. Then, we can associate to $\Delta$ the hypergraph $\CE(\Delta)$ of minimal non-faces of $\Delta$. It is easy to verify that $\IE(\CE(\Delta)) = \Delta$.

A simplicial complex $\Delta$ is called $\Z$-acyclic if $\tilde{H}_{i}(\Delta) = 0$ for all $i$. Recall that a simplicial complex $\Delta$ is contractible if and only if $\Delta$ is simply-connected and $\Z$-acyclic (\cite{Bj}). Recall also that if $\Delta$ is a simplicial complex, then $\connH(\Delta) \geq \conn(\Delta)$ holds. In particular, if $\Delta$ is a simply-connected simplicial complex, then the homological connectivity equals to homotopical connectivity by the Hurewicz theorem (\cite{AH}).

Throughout this paper, $\mathbb{S}^n$ will denote the $n$-dimensional sphere. If two topological spaces $X$ and $Y$ are homotopy equivalent, we denote it by $X \simeq Y$. The join and the wedge of two topological spaces $X$ and $Y$ are denoted by $X*Y$ and $X\vee Y$, respectively. The suspension of a topological space $X$ will be denoted by $\Sigma(X)$.

The following fact is well-known.

\begin{thm}\label{Aharoni, Berger, Ziv's proposed function gives lower bound for connectivity of independence complex of a simple graph}\cite{aharoniberger} Let $\psi$ be a function from the class of all graphs $G=(V,E)$ to the set $ \Z_{\geq}\cup\{\infty\}$ defined by

\begin{equation*}
\psi(G)=
\begin{cases}
0, & \text{if $V= \emptyset$},\\
\infty, & \text{if $V\neq \emptyset$ and $E =\emptyset$},\\
\max_{e\in E}\{\min\{\psi(G - e),\psi(G_{V \setminus (e \cup N(e))}) + 1\}\}, & \text{otherwise}.\\
\end{cases}
\end{equation*}

Then, $\conn(\IE(G))\geq \psi(G) -2$ holds. 
\end{thm}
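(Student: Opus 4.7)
\textbf{The plan} is to induct on $|V(G)|+|E(G)|$. The base cases are immediate: if $V=\emptyset$ then $\IE(G)=\emptyset$ and $\conn(\IE(G))=-2=\psi(G)-2$, while if $V\neq\emptyset$ and $E=\emptyset$ then $\IE(G)=2^V$ is a full simplex, so $\conn(\IE(G))=\infty$ matches $\psi(G)=\infty$.

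For the inductive step, assume $V\neq\emptyset$ and $E\neq\emptyset$ and select an edge $e=\{u,v\}\in E$ realizing the maximum in the recursive definition of $\psi(G)$, so that
$$\psi(G-e)\geq\psi(G)\qquad\text{and}\qquad\psi\bigl(G_{V\setminus(e\cup N(e))}\bigr)+1\geq\psi(G)$$
both hold. Set $k:=\psi(G)-2$ and write $L:=\IE(G_{V\setminus(e\cup N(e))})$. The central geometric step is the decomposition
$$\IE(G-e)\;=\;\IE(G)\,\cup\,\bigl(\{u,v\}*L\bigr),$$
in which $\{u,v\}*L$ is the closed star of the $1$-simplex $\{u,v\}$ in $\IE(G-e)$; the identification of the link with $L$ holds because $\{u,v\}\cup F$ is independent in $G-e$ exactly when $F$ avoids $e\cup N(e)$ and is independent in $G$. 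The cone $\{u,v\}*L$ is contractible, and the two pieces intersect in $\partial\{u,v\}*L\cong\Sigma L$.

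I would then propagate the inductive hypothesis through Mayer--Vietoris. By induction $\tilde H_n(\IE(G-e))=0$ for $n\leq k$ and $\tilde H_n(L)=0$ for $n\leq k-1$, so suspension gives $\tilde H_n(\Sigma L)=0$ for $n\leq k$. The reduced Mayer--Vietoris sequence, combined with the vanishing of reduced homology on the contractible cone, then forces $\tilde H_n(\IE(G))=0$ for every $n\leq k$. To upgrade this to a homotopical bound when $k\geq 1$, I would apply van Kampen to the same decomposition: the recursive bound forces $L$ to be path-connected, so $\Sigma L$ is simply connected, and together with $\pi_1(\{u,v\}*L)=1$ this yields $\pi_1(\IE(G))\cong\pi_1(\IE(G-e))$, which is trivial by induction. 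The Hurewicz theorem then promotes homological connectivity to the claimed homotopical connectivity.

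The main obstacle I anticipate is the low-connectivity regime $k\in\{-2,-1,0\}$, where Hurewicz does not apply and the argument must be handled directly: non-emptiness of $\IE(G)$ (for $k=-1$) is immediate from $V\neq\emptyset$, while path-connectedness of $\IE(G)$ (for $k=0$) still falls out of the Mayer--Vietoris argument applied to $\tilde H_0$. A secondary subtlety is ensuring that $\Sigma L$ is genuinely path-connected in the van Kampen step, which is automatic once $k\geq 1$ because the recursive bound then compels $L$ itself to be path-connected.
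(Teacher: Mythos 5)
Your proposal is correct and follows essentially the same route as the paper's argument (which proves the statement in its hypergraph generality): the decomposition $\IE(G-e)=\IE(G)\cup\bigl(\{u,v\}*L\bigr)$ with intersection $\partial\{u,v\}*L\cong\Sigma L$, Mayer--Vietoris for the homological bound, van Kampen plus Hurewicz to upgrade to homotopical connectivity, and a direct treatment of the low cases $\psi(G)\leq 2$. The only cosmetic difference is that you induct on $|V|+|E|$ where the paper inducts on the number of edges; both are valid.
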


An explicit proof of Theorem~\ref{Aharoni, Berger, Ziv's proposed function gives lower bound for connectivity of independence complex of a simple graph} were given by~\cite{Adamaszekbarmak} and one application can be found in~\cite{aharonihoward}. Aharoni, Berger and Ziv~\cite{aharoniberger} conjectured that $\conn(\IE(G)) =\psi(G) - 2$ holds. This was confirmed for chordal graphs by Kawamura~\cite{Kawamura}. However, Barmak~\cite{barmak} disproved the claim by using a result of recursion theory. Moreover, Adamaszek and Barmak~\cite{Adamaszekbarmak} provided some examples for which the inequality is strict.

We require the following known facts from algebraic topology~\cite{Bj, Milnor, Munkres}.

\begin{thm}\label{union of complexes and homotopy}\cite{Bj} Let $\Delta_0,\dots, \Delta_n$ be contractible simplicial complexes and assume that $\Delta_i \cap \Delta_j \subseteq \Delta_0$ for all $1 \leq i < j \leq n$. Then, $\bigcup_{i=0}^n \Delta_i \simeq \vee_{i=1}^n \Sigma(\Delta_0 \cap \Delta_i)$.
\end{thm}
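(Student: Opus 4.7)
The plan is to exploit the hypothesis $\Delta_i \cap \Delta_j \subseteq \Delta_0$ for $1 \leq i < j \leq n$ by collapsing the contractible piece $\Delta_0$ to a point and watching the remaining complexes separate into a wedge. Set $\Delta = \bigcup_{i=0}^n \Delta_i$. Because $\Delta_0$ is a subcomplex of the simplicial complex $\Delta$, the inclusion $\Delta_0 \hookrightarrow \Delta$ is a cofibration, and since $\Delta_0$ is contractible, the quotient map $\Delta \to \Delta / \Delta_0$ is a homotopy equivalence. Thus it suffices to identify $\Delta / \Delta_0$ up to homotopy.

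Next, I would analyze $\Delta / \Delta_0$ combinatorially. Under the quotient, the image of $\Delta_i$ is the pointed space $\Delta_i / (\Delta_0 \cap \Delta_i)$, with basepoint the image of $\Delta_0$. The assumption $\Delta_i \cap \Delta_j \subseteq \Delta_0$ (for $i \neq j$, both in $\{1,\dots,n\}$) says that any overlap between two distinct $\Delta_i$'s already lies in $\Delta_0$, so it collapses to the basepoint. Consequently the images of distinct $\Delta_i$'s ($i \geq 1$) meet only at the basepoint, which gives the identification
\[
\Delta / \Delta_0 \;=\; \bigvee_{i=1}^n \Delta_i / (\Delta_0 \cap \Delta_i).
\]

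To finish I would identify each wedge summand. For each $i \geq 1$, the inclusion $\Delta_0 \cap \Delta_i \hookrightarrow \Delta_i$ is a cofibration whose target is contractible. Using the cofiber sequence
\[
\Delta_0 \cap \Delta_i \;\hookrightarrow\; \Delta_i \;\longrightarrow\; \Delta_i / (\Delta_0 \cap \Delta_i) \;\longrightarrow\; \Sigma(\Delta_0 \cap \Delta_i),
\]
contractibility of $\Delta_i$ forces the connecting map to be a homotopy equivalence; equivalently, replacing $\Delta_i$ up to homotopy by the cone $C(\Delta_0 \cap \Delta_i)$ with its natural inclusion, one has $\Delta_i / (\Delta_0 \cap \Delta_i) \simeq C(\Delta_0 \cap \Delta_i) / (\Delta_0 \cap \Delta_i) = \Sigma(\Delta_0 \cap \Delta_i)$. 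Assembling the pieces yields
\[
\Delta \;\simeq\; \Delta / \Delta_0 \;=\; \bigvee_{i=1}^n \Delta_i / (\Delta_0 \cap \Delta_i) \;\simeq\; \bigvee_{i=1}^n \Sigma(\Delta_0 \cap \Delta_i).
\]

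The main delicate point is the middle step: one must verify that after collapsing $\Delta_0$, the images of the $\Delta_i$'s genuinely wedge together rather than being glued along something larger than the basepoint. This is exactly where the hypothesis $\Delta_i \cap \Delta_j \subseteq \Delta_0$ is indispensable; without it, the quotient $\Delta / \Delta_0$ would retain nontrivial gluing data between distinct $\Delta_i$'s and would not split as a wedge. Everything else is formal once one is willing to work in the category of CW pairs and cofibrations, which is automatic for simplicial complexes.
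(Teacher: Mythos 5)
The paper does not prove this statement at all: it is quoted verbatim from Bj\"orner's Handbook chapter \cite{Bj} (it is Lemma 10.4(ii) there), so there is no in-paper argument to compare against. Your proof is correct and is essentially the standard one: collapse the contractible subcomplex $\Delta_0$ along the cofibration $\Delta_0\hookrightarrow\Delta$, observe that the hypothesis $\Delta_i\cap\Delta_j\subseteq\Delta_0$ makes the quotient split as the wedge $\bigvee_{i=1}^n \Delta_i/(\Delta_0\cap\Delta_i)$, and identify each summand with $\Sigma(\Delta_0\cap\Delta_i)$ using contractibility of $\Delta_i$. Two small points deserve a word if you want this airtight. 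First, in the last step the cleanest justification is $\Delta_i/(\Delta_0\cap\Delta_i)\simeq \Delta_i\cup_{\Delta_0\cap\Delta_i}C(\Delta_0\cap\Delta_i)$, followed by collapsing the contractible subcomplex $\Delta_i$ of this mapping cone to get $C(\Delta_0\cap\Delta_i)/(\Delta_0\cap\Delta_i)=\Sigma(\Delta_0\cap\Delta_i)$; the phrase ``replacing $\Delta_i$ up to homotopy by the cone'' implicitly invokes a gluing lemma for equivalences under $\Delta_0\cap\Delta_i$, which is true but worth stating. Second, the degenerate case $\Delta_0\cap\Delta_i=\emptyset$ should be flagged: there the quotient $\Delta_i/\emptyset$ is $\Delta_i$ with a disjoint basepoint, which is homotopy equivalent to $\mathbb{S}^0=\Sigma(\emptyset)$ since $\Delta_i$ is contractible, so the formula survives under the usual convention. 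Neither issue is a gap in substance; the argument is sound.
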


\begin{thm}\cite{Milnor}\label{thm: a condition for a join to be simply-connected space}
If $X$ is a path-connected space and $Y$ is any non-empty space, then the join $Y * X$ is simply-connected.
\end{thm}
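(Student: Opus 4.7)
Plan. The strategy is a Seifert--van Kampen argument whose key input is that both subspaces $Y, X \subset Y*X$ are null-homotopic in $Y*X$ because each sits inside a contractible cone. First, I would verify path-connectedness: picking $y_0 \in Y$ and $x_0 \in X$, any point $[y, x, t] \in Y*X$ can be joined to $x_0$ either by sliding the join parameter from $t$ toward $1$ (when $t > 0$) or by traversing the segment $\{[y, x_0, s] : s \in [0, 1]\}$ inside the cone $Y*\{x_0\}$ (when $t = 0$), followed by a path in $X$ from $x$ to $x_0$.

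For $\pi_1(Y*X) = 0$, cover $Y*X$ by the two open sets $A = \{[y, x, t] : t < 2/3\}$ and $B = \{[y, x, t] : t > 1/3\}$. The maps $[y, x, t] \mapsto y$ and $[y, x, t] \mapsto x$ give deformation retractions $A \simeq Y$ and $B \simeq X$, so $B$ is path-connected and the path-components of $A$ are in bijection with those of $Y$; the intersection $A \cap B$ is homeomorphic to $Y \times X \times (1/3, 2/3)$, whose path-components are $Y_\alpha \times X \times (1/3, 2/3)$ indexed by the path-components $Y_\alpha$ of $Y$. The pivotal observation is that the inclusion $X \hookrightarrow Y*X$ factors through the contractible cone $\{y_0\} * X$, while $Y \hookrightarrow Y*X$ factors through the contractible cone $Y*\{x_0\}$; consequently every loop in $X$ and every loop in $Y$ is null-homotopic in $Y*X$.

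Now take any loop $\gamma$ based at $x_0 \in X \subset B$. Applying the Lebesgue number lemma to the cover $\{\gamma^{-1}(A), \gamma^{-1}(B)\}$ of $S^1$, subdivide $\gamma$ into finitely many consecutive arcs, each lying entirely in $A$ or in $B$, with the transition points lying in $A \cap B$. Since $B$ is path-connected, connect each transition point to $x_0$ by a path in $B$; the standard rewriting then expresses $[\gamma]$ as a product in $\pi_1(Y*X, x_0)$ of based loops, each contained in $A_\alpha \cup B$ for some path-component $A_\alpha$ of $A$. On each subspace $A_\alpha \cup B$, both pieces are path-connected with path-connected intersection $A_\alpha \cap B \cong Y_\alpha \times X \times (1/3, 2/3)$, so the ordinary Van Kampen theorem shows that $\pi_1(A_\alpha \cup B)$ is generated by the images of $\pi_1(A_\alpha) \cong \pi_1(Y_\alpha)$ and $\pi_1(B) \cong \pi_1(X)$. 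Both images vanish in $\pi_1(Y*X)$ by the cone argument above, so each component loop is null-homotopic and therefore so is $\gamma$.

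The main obstacle is the possible disconnectedness of $A$ (and hence of $A \cap B$) when $Y$ is not path-connected: this is precisely what rules out a naive one-step application of Van Kampen and forces the component-by-component analysis, using the path-connectedness of $B$ to bridge the various components of $A$ back to the basepoint.
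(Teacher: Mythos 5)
This theorem is imported by the paper from Milnor's \emph{Construction of Universal Bundles, II} and is stated there without proof, so there is no in-paper argument to match; your task is therefore to supply a proof from scratch, and the one you give is correct. Your route --- covering $Y*X$ by the open sets $A=\{t<2/3\}\simeq Y$ and $B=\{t>1/3\}\simeq X$, observing that $Y$ and $X$ each include into $Y*X$ through a contractible cone ($Y*\{x_0\}$ and $\{y_0\}*X$), and then running a Lebesgue-number/van Kampen argument component by component --- is an elementary, self-contained treatment of exactly the special case needed, whereas Milnor's cited lemma is the stronger general statement that the join of an $m$-connected space with an $n$-connected space is $(m+n+2)$-connected, obtained by different (connectivity-theoretic) methods. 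You correctly identify the one real subtlety, namely that $Y$ need not be path-connected, so $A$ and $A\cap B$ may be disconnected and a one-step van Kampen application is unavailable; your fix (decompose $\gamma$ into arcs, bridge transition points to $x_0$ inside the path-connected set $B$, and apply van Kampen to each $A_\alpha\cup B$ with path-connected intersection $Y_\alpha\times X\times(1/3,2/3)$) is sound. Two small points worth making explicit: the basepoint for $\pi_1(A_\alpha)$ must be taken in $A_\alpha\cap B$ and transported to $x_0$ along a path in $B$; and the openness of the path-components $A_\alpha$ in $A$ (needed to invoke van Kampen) requires local path-connectedness, which holds automatically for the simplicial complexes to which the paper applies the theorem but should be assumed or addressed if one insists on arbitrary topological spaces.
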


\begin{thm}\cite{Munkres}\label{thm: join and homology}
Assume $X * Y$ exists. If $Y\simeq \mathbb{S}^{n-1}$, then for all $i$, $\tilde{H}_{i+n}(X * Y)$ is isomorphic to $\tilde{H}_i(X)$.
\end{thm}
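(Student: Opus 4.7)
The plan is to reduce the statement to the suspension isomorphism by identifying $X * Y$ with an iterated suspension of $X$. Concretely, I would proceed in three steps. First, since the join is homotopy invariant in each factor (in the category of CW complexes, which covers our setting), the given homotopy equivalence $Y \simeq \mathbb{S}^{n-1}$ yields $X * Y \simeq X * \mathbb{S}^{n-1}$.

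Second, I would establish by induction on $n$ that $X * \mathbb{S}^{n-1} \simeq \Sigma^n X$. The base case $n = 1$ is the classical homeomorphism $X * \mathbb{S}^0 = X * \{a,b\} \cong \Sigma X$, because joining with a two-point set is by definition the unreduced suspension. For the inductive step, combine associativity of the join with the identity $\mathbb{S}^n \simeq \mathbb{S}^{n-1} * \mathbb{S}^0$ to obtain $X * \mathbb{S}^n \simeq (X * \mathbb{S}^{n-1}) * \mathbb{S}^0 \simeq \Sigma(X * \mathbb{S}^{n-1}) \simeq \Sigma^{n+1} X$. Iterating the suspension isomorphism $\tilde{H}_k(\Sigma W) \cong \tilde{H}_{k-1}(W)$ then gives $\tilde{H}_{i+n}(\Sigma^n X) \cong \tilde{H}_i(X)$. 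Chaining everything together produces the desired isomorphism $\tilde{H}_{i+n}(X * Y) \cong \tilde{H}_{i+n}(X * \mathbb{S}^{n-1}) \cong \tilde{H}_{i+n}(\Sigma^n X) \cong \tilde{H}_i(X)$.

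The main obstacle is point-set topological: associativity of the join and homotopy invariance of the join do not hold in full generality for arbitrary topological spaces (the quotient topology on $(A * B) * C$ need not coincide with that on $A * (B * C)$ without a compact generation hypothesis). In the present paper all spaces are (or can be taken as) CW complexes, so these subtleties disappear and each intermediate homotopy equivalence is genuine. If one wishes to bypass the join-manipulations entirely, an equivalent route is the Künneth formula for joins, $\tilde{H}_m(X * Y) \cong \bigoplus_{i+j = m-1} \tilde{H}_i(X) \otimes \tilde{H}_j(Y) \;\oplus\; \bigoplus_{i+j = m-2} \mathrm{Tor}(\tilde{H}_i(X), \tilde{H}_j(Y))$; substituting $Y = \mathbb{S}^{n-1}$ makes the Tor term vanish (sphere homology is free) and leaves a single nonzero summand in degree $j = n-1$, recovering $\tilde{H}_{i+n}(X * Y) \cong \tilde{H}_i(X)$ directly.
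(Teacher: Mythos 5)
Your proof is correct. Note, however, that the paper offers no proof of this statement at all: it is quoted as a known fact with a citation to Munkres, so there is no ``paper's proof'' to match. In Munkres the result is obtained algebraically, from the observation that the augmented chain complex of a simplicial join is (up to a degree shift) the tensor product of the augmented chain complexes of the factors, followed by the K\"unneth theorem --- which is exactly your second, ``bypass'' route, where the freeness of $\tilde{H}_*(\mathbb{S}^{n-1})$ kills the $\mathrm{Tor}$ term. Your primary route (replace $Y$ by $\mathbb{S}^{n-1}$, identify $X * \mathbb{S}^{n-1}$ with $\Sigma^n X$ via $X * \mathbb{S}^0 \cong \Sigma X$ and associativity, then iterate the suspension isomorphism) is more geometric and arguably more transparent, and you correctly flag the only real caveat, namely that associativity and homotopy invariance of the join require a compactly generated or CW/simplicial setting --- which is satisfied here, since every space in the paper is a finite simplicial complex. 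One small point worth making explicit, because it matters for how the theorem is actually used in the proof of Theorem~\ref{thm: exact sequence of homology groups-flagization}: the statement is applied with $X = \IE(\CE : F)$, which may be the complex $\{\emptyset\}$, and with $i = -1$. Your argument still goes through there provided one uses the standard conventions $\Sigma\{\emptyset\} = \mathbb{S}^0$ and $\tilde{H}_{-1}(\{\emptyset\}) = \mathbb{Z}$ (equivalently, one works with augmented chain complexes throughout); it would be worth one sentence to record this degenerate case rather than leaving it implicit.
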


\section{The connectivity of independence complexes of hypergraphs}\label{section: generalization of Aharoni, Berger and Ziv Teorem}

In this section, we prove that $\psi(\CE)$ provides a lower bound for the connectivity of the independence complex $\IE(\CE)$ of a hypergraph $\CE$. In the case of the independence complex of a simple graph, this reduces to a result (mentioned in Section~\ref{preliminaries: simplicial complex} as Theorem~\ref{Aharoni, Berger, Ziv's proposed function gives lower bound for connectivity of independence complex of a simple graph}) of Aharoni, Berger and Ziv in~\cite{aharoniberger}. We then prove that $\psi(\CE) = \conn(\IE(\CE))$ in the case when $\IE(\CE)$ is not simply-connected. Moreover, we show that $\psi(\CE)$ and $\conn(\IE(\CE))$ can take arbitrary values $l,k$ with $3\leq l\leq k$. Furthermore, we provide a lower bound on the homotopical connectivity number of the independence complex of a hypergraph in terms of the order and the maximal degree. Our departure in this direction begins by generalizing the results of~\cite{Adamaszekbarmak}.

For a given edge $F$ of a hypergraph $\CE$ on $V$, it is easy to verify that $$\IE(\CE:F) = \lk_{\IE(\CE-F)}(F)$$ and $$\IE(\CE-F) = \IE(\CE) \cup (||F|| * \IE(\CE:F)),$$ where $$||F|| = \IE(\CE)_F\cup\{F\}.$$

The following is immediate by the Mayer-Vietoris long exact sequence and generalizes Claim 3.1 of Meshulam in~\cite{Mesh}.

\begin{thm}\label{thm: exact sequence of homology groups-flagization} Let $\CE$ be a hypergraph and let $F$ be an edge of $\CE$. Then, we have the following long exact sequence:
\begin{equation*}
\dots\rightarrow \tilde{H}_{i-|F|+1}(\IE(\CE:F))\xrightarrow{} \tilde{H}_{i}(\IE(\CE))\xrightarrow{} \tilde{H}_{i}({\IE(\CE - F)})\rightarrow \tilde{H}_{i-|F|}(\IE(\CE:F))\rightarrow \dots 
\end{equation*}
\end{thm}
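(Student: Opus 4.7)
The strategy is to apply the Mayer--Vietoris long exact sequence to the decomposition $\IE(\CE - F) = A \cup B$ provided in the paper, where $A = \IE(\CE)$ and $B = ||F|| * \IE(\CE:F)$, and then simplify the resulting terms using the join-homology formula from Theorem~\ref{thm: join and homology}. Since $A$ and $B$ are both subcomplexes of the simplicial complex $\IE(\CE - F)$, the Mayer--Vietoris sequence
$$\dots \to \tilde{H}_i(A \cap B) \to \tilde{H}_i(A) \oplus \tilde{H}_i(B) \to \tilde{H}_i(A \cup B) \to \tilde{H}_{i-1}(A \cap B) \to \dots$$
is available without additional hypothesis.

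Next I would record two combinatorial facts about $||F||$. First, by the incomparability of hypergraph edges no proper subset of $F$ is itself an edge of $\CE$, so every proper subset of $F$ belongs to $\IE(\CE)$; therefore $||F|| = \IE(\CE)_F \cup \{F\}$ is the full closed simplex on $F$. In particular $||F||$ is contractible, and hence $B = ||F|| * \IE(\CE:F)$ is contractible as a join with a contractible factor, which forces $\tilde{H}_i(B) = 0$ for every $i$. Second, I would verify that $A \cap B = \partial ||F|| * \IE(\CE:F)$, where $\partial ||F||$ denotes the boundary of the simplex on $F$. The inclusion $\supseteq$ follows because any $F_0 \cup \sigma$ with $F_0 \subsetneq F$ and $\sigma \in \IE(\CE:F)$ satisfies $F_0 \cup \sigma \subseteq F \cup \sigma \in \IE(\CE - F)$, using $\IE(\CE:F) = \lk_{\IE(\CE-F)}(F)$, and does not contain $F$, hence lies in $\IE(\CE) = A$. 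Conversely, any face of $B$ can be written as $F_0 \cup \sigma$ with $F_0 \subseteq F$ and $\sigma \in \IE(\CE:F)$; if it additionally lies in $A = \IE(\CE)$, it cannot contain the edge $F$, and this forces $F_0 \subsetneq F$.

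Substituting $\tilde{H}_i(B) = 0$ collapses the Mayer--Vietoris sequence to
$$\dots \to \tilde{H}_i(A \cap B) \to \tilde{H}_i(\IE(\CE)) \to \tilde{H}_i(\IE(\CE - F)) \to \tilde{H}_{i-1}(A \cap B) \to \dots,$$
and since $\partial ||F|| \simeq \mathbb{S}^{|F|-2}$, Theorem~\ref{thm: join and homology} yields $\tilde{H}_i(\partial ||F|| * \IE(\CE:F)) \cong \tilde{H}_{i - |F| + 1}(\IE(\CE:F))$. Inserting this isomorphism in place of $\tilde{H}_i(A \cap B)$ and $\tilde{H}_{i-1}(A \cap B)$ reproduces exactly the long exact sequence in the statement. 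The step demanding the most care is the combinatorial identification of $A \cap B$, where the presence of $||F||$ on one side must be reconciled with the constraint of lying in $\IE(\CE)$ on the other via the observation that $F$ itself is forbidden while all its proper subsets are allowed; everything else is a direct substitution.
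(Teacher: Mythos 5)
Your proof is correct and follows essentially the same route as the paper: decompose $\IE(\CE-F)$ as $\IE(\CE)\cup(||F||*\IE(\CE:F))$, identify the intersection as $\IE(\CE)_F*\IE(\CE:F)$ with $\IE(\CE)_F=\partial||F||\simeq\mathbb{S}^{|F|-2}$, apply Theorem~\ref{thm: join and homology}, and run Mayer--Vietoris. You simply spell out the details (contractibility of $||F||*\IE(\CE:F)$ and the two inclusions for $A\cap B$) that the paper leaves implicit.
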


\begin{proof} Suppose that $\CE$ is a hypergraph and $F$ being an edge of $\CE$. Then, $\IE(\CE - F) = \IE(\CE) \cup (||F||* \IE(\CE : F))$ and $\IE(\CE)\cap (||F||* \IE(\CE : F)) = \IE(\CE)_F *\IE(\CE : F)$ hold, where $||F|| = \IE(\CE)_F \cup\{F\}$. Note that $\tilde{H}_i(\IE(\CE)\cap (||F||* \IE(\CE : F)))$ is isomorphic to $\tilde{H}_{i-|F|+1}(\IE(\CE : F))$ by Teorem~\ref{thm: join and homology}. If we apply the Mayer-Vietoris long exact sequence, we obtain the desired long exact sequence.
\end{proof}

\begin{lem}\label{lem: some relations for the number of minimal non-faces}

Let $\CE$ be a hypergraph on $V$ and let $F$ be an edge of $\CE$. Then, we have the following:
\begin{enumerate}[label=(\roman*)]
\item $|\CE|=|\CE - F| + 1$,
\item $|\CE|\geq |\CE:F| + 1$,
\end{enumerate} 
\end{lem}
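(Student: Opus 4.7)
The plan is to unpack the definitions of $\CE - F$ and $\CE : F$ directly; both parts then reduce to simple counting with no homological or topological input.

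For (i), recall that $\CE - F$ is just $\CE$ with the single edge $F$ removed. Hence $|\CE - F| = |\CE| - 1$, which is exactly the asserted identity. There is nothing more to do in this part.

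For (ii), the strategy is to construct an injection from the edge set of $\CE : F$ into the edge set of $\CE - F$. By the very definition of $\CE : F$, each edge $E^* \in \CE : F$ is a minimal element of the family $\{E \setminus F : E \in \CE - F,\ |E \setminus F| \geq 2\}$. In particular, for every such $E^*$ there is at least one edge $E \in \CE - F$ with $E \setminus F = E^*$; for each $E^*$ I would choose one such witness $E(E^*) \in \CE - F$. If $E_1^* \neq E_2^*$ are two distinct edges of $\CE : F$, then the chosen witnesses satisfy $E(E_1^*) \setminus F = E_1^* \neq E_2^* = E(E_2^*) \setminus F$, which forces $E(E_1^*) \neq E(E_2^*)$. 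So the assignment $E^* \mapsto E(E^*)$ is an injection $\CE : F \hookrightarrow \CE - F$, giving $|\CE : F| \leq |\CE - F| = |\CE| - 1$ and hence (ii).

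No step looks difficult. The only thing to be careful about is interpreting the definition of $\CE : F$ correctly: the minimality clause and the cardinality-at-least-two clause together are exactly what guarantees that every edge of $\CE : F$ arises as $E \setminus F$ for some edge $E$ of $\CE - F$, which is the content needed to define the injection above.
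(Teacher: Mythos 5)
Your proof is correct and takes essentially the same approach as the paper: part (i) is immediate, and for part (ii) the paper likewise constructs a one-to-one map from $\CE : F$ into $\CE - F$ by choosing, for each edge $S$ of $\CE : F$, a witness edge $T$ of $\CE - F$ with $T = S \cup A$ for some $A \subset F$. The only cosmetic difference is that the paper argues for the existence of the witness via the independence complex, whereas you read it off directly from the definition of $\CE : F$; the injectivity argument is the same.
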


\begin{proof}
Claim (i) is obvious. To prove (ii), let $\CE$ be a hypergraph on $V$ and let $F$ be an edge of $\CE$. Assume that $S$ is an edge of $\CE: F$ satisfying $S\in \IE(\CE)$. Then, there must exist an edge $T$ of $\CE$ such that $T = S \cup A$ for some subset $A\subset F$, since otherwise $F\cup S \in \IE(\CE - F)$, yielding $S\in \IE(\CE : F)$. We can now define a one-to-one function $f$ from $\CE: F$ to $\CE -F$ by selecting one such $T$ and define $f(S)=T$. This completes the proof.
\end{proof}

We have the following homological connectivity result for independence complexes of hypergraphs.

\begin{cor}\label{cor: homological connectivity bound given by a function} For any hypergraph $\CE$, we have $\connH(\IE(\CE))\geq \psi(\CE) -2$.
\end{cor}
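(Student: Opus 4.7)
The plan is to proceed by induction on the number of edges $|\CE|$, using Lemma~\ref{lem: some relations for the number of minimal non-faces} to guarantee that both $\CE - F$ and $\CE : F$ have strictly fewer edges than $\CE$ for every $F \in \CE$, so that the inductive hypothesis applies to both hypergraphs appearing in the recursive definition of $\psi$.

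For the base cases, if $V = \emptyset$ then $\psi(\CE) = 0$, and $\IE(\CE) = \{\emptyset\}$ has $\connH(\IE(\CE)) \geq -2$ trivially. If $V \neq \emptyset$ and $\CE = \emptyset$, then $\IE(\CE)$ is the full simplex on $V$, which is contractible, so $\connH(\IE(\CE)) = \infty = \psi(\CE) - 2$.

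For the inductive step, I would fix $\CE$ with $V \neq \emptyset$ and $\CE \neq \emptyset$, and choose an edge $F \in \CE$ attaining the maximum in the recursion, so that
\[
\psi(\CE - F) \geq \psi(\CE) \quad \text{and} \quad \psi(\CE:F) \geq \psi(\CE) - |F| + 1.
\]
Applying the inductive hypothesis to both $\CE - F$ and $\CE : F$ yields
\[
\connH(\IE(\CE - F)) \geq \psi(\CE) - 2 \quad \text{and} \quad \connH(\IE(\CE:F)) \geq \psi(\CE) - |F| - 1.
\]

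Now I would invoke the long exact sequence of Theorem~\ref{thm: exact sequence of homology groups-flagization}:
\[
\tilde{H}_{i-|F|+1}(\IE(\CE:F)) \to \tilde{H}_i(\IE(\CE)) \to \tilde{H}_i(\IE(\CE - F)) \to \tilde{H}_{i-|F|}(\IE(\CE:F)).
\]
For every $i \leq \psi(\CE) - 2$, the term on the right of $\tilde{H}_i(\IE(\CE))$ vanishes by the first bound, while the term on the left vanishes by the second bound since $i - |F| + 1 \leq \psi(\CE) - |F| - 1$. Exactness then forces $\tilde{H}_i(\IE(\CE)) = 0$ for all $i \leq \psi(\CE) - 2$, which is exactly the desired conclusion $\connH(\IE(\CE)) \geq \psi(\CE) - 2$.

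There is no real obstacle here beyond bookkeeping: the recursion for $\psi$ was tailored to match the shift $|F| - 1$ appearing in the Mayer--Vietoris/join long exact sequence of Theorem~\ref{thm: exact sequence of homology groups-flagization}, and the edge-count inequalities of Lemma~\ref{lem: some relations for the number of minimal non-faces} ensure the induction is well-founded. The only step requiring a small check is verifying that the two offsets $\psi(\CE) - 2$ and $\psi(\CE) - |F| - 1$ interact correctly with the indices $i$ and $i - |F| + 1$ in the exact sequence, which is precisely where the $+|F| - 1$ correction in the definition of $\psi(\CE)$ pays for itself.
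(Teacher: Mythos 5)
Your proof is correct and follows essentially the same route as the paper: induction on $|\CE|$, choosing an edge $F$ attaining the maximum in the recursion for $\psi$, and squeezing $\tilde{H}_i(\IE(\CE))$ between two vanishing terms in the long exact sequence of Theorem~\ref{thm: exact sequence of homology groups-flagization}. The index bookkeeping ($i-|F|+1 \leq \psi(\CE)-|F|-1$ for $i \leq \psi(\CE)-2$) and the appeal to Lemma~\ref{lem: some relations for the number of minimal non-faces} for well-foundedness are both handled correctly.
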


\begin{proof}
We proceed by induction on $|\CE|$. For the base case $|\CE| = 0$, the claim is trivial. Suppose now that $|\CE| \geq 1$ and choose any edge $F$ of $\CE$ satisfying $\psi(\CE) = \min\{\psi(\CE - F),\psi(\CE: F) + |F| -1\}$. The induction hypothesis then provides that $\psi(\CE - F) \leq \connH(\CE - F) + 2$ and $\psi(\CE: F) \leq \connH(\CE: F) +2$. Thus, we have $\tilde{H}_{i}(\IE(\CE - F)) = 0$ for every $i \leq \psi(\CE -F) - 2$ and $\tilde{H}_{i}(\IE(\CE : F)) = 0$ for every $i \leq \psi(\CE : F)-2$. It therefore follows that $\tilde{H}_{i}(\IE(\CE - F) = 0$ for all $i \leq \psi(\CE) - 2$ and $\tilde{H}_{i}(\IE(\CE : F)) = 0$ for all $i \leq \psi(\CE) - |F| - 1$, since $\psi(\CE - F) \geq \psi(\CE)$ and $\psi(\CE : F)\geq \psi(\CE) - |F| + 1$ by the assumption on $F$. Hence applying Theorem~\ref{thm: exact sequence of homology groups-flagization}, we obtain $\tilde{H}_{i}(\IE(\CE)) = 0$ for every $i \leq \psi(\CE) - 2$. This completes the proof.

\end{proof}

We next provide a sufficient condition which guaranties that $\IE(\CE)$ is simply-connected.
 
\begin{thm}\label{thm: path-simply connected complexes} Let $\CE$ be a hypergraph.

\begin{enumerate}[label=(\roman*)]
\item If $\psi(\CE) \geq 2$, then $\IE(\CE)$ is path-connected.
\item If $\psi(\CE) \geq 3$, then $\IE(\CE)$ is simply-connected.
\end{enumerate}
\end{thm}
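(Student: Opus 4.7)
The plan is to read (i) off Corollary~\ref{cor: homological connectivity bound given by a function} and to prove (ii) by induction on $|\CE|$ that parallels the proof of that corollary, with Seifert--van Kampen replacing Mayer--Vietoris. For (i), the hypothesis $\psi(\CE)\geq 2$ yields $\connH(\IE(\CE))\geq 0$, so $\tilde{H}_{-1}(\IE(\CE))=\tilde{H}_{0}(\IE(\CE))=0$; the first vanishing says $\IE(\CE)$ is nonempty and the second that it is connected, and simplicial complexes are locally path-connected, so these together give path-connectedness.

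For (ii), the base case $|\CE|=0$ together with $\psi(\CE)\geq 3$ forces $V\neq\emptyset$ and $\CE=\emptyset$, so $\IE(\CE)$ is a full simplex, hence contractible. For the inductive step, I choose an edge $F$ realizing the maximum in the definition of $\psi$, so that $\psi(\CE-F)\geq 3$ and $\psi(\CE\colon F)+|F|-1\geq 3$; the induction hypothesis then gives $\pi_{1}(\IE(\CE-F))=0$. I feed the decomposition
\[
\IE(\CE-F)=A\cup B,\quad A=\IE(\CE),\quad B=||F||*\IE(\CE\colon F),
\]
together with $A\cap B=\IE(\CE)_F *\IE(\CE\colon F)$ (as recorded just before Theorem~\ref{thm: exact sequence of homology groups-flagization}) into van Kampen. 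Here $||F||=2^F$ is a full simplex, so $B$ is contractible, and $\IE(\CE)_F=2^F\setminus\{F\}$ is the boundary of $||F||$, homotopy equivalent to $\mathbb{S}^{|F|-2}$. Once $A\cap B$ is known to be simply-connected, van Kampen applied to open regular neighborhoods of $A$ and $B$ inside $\IE(\CE-F)$ yields $\pi_{1}(A)\cong\pi_{1}(A\cup B)=0$, which is $\pi_{1}(\IE(\CE))=0$.

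The core of the argument, and the main obstacle, is the simple-connectedness of $A\cap B$. I split into three cases according to $|F|$ and whether $\IE(\CE\colon F)$ is empty. If $|F|=2$, the choice of $F$ forces $\psi(\CE\colon F)\geq 2$, so part (i) makes $\IE(\CE\colon F)$ path-connected; then Theorem~\ref{thm: a condition for a join to be simply-connected space} applied to $\mathbb{S}^{0}*\IE(\CE\colon F)$ gives simple-connectedness. If $|F|\geq 3$ and $\IE(\CE\colon F)$ is nonempty, the same theorem applies with the path-connected factor being $\IE(\CE)_F\simeq\mathbb{S}^{|F|-2}$. Finally, if $\IE(\CE\colon F)$ is empty then $\psi(\CE\colon F)=0$, so the constraint forces $|F|\geq 4$, and $A\cap B$ degenerates to $\IE(\CE)_F\simeq\mathbb{S}^{|F|-2}$ with $|F|-2\geq 2$, which is already simply-connected. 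The delicate point is the bookkeeping that the budget $\psi(\CE\colon F)+|F|-1\geq 3$ gets spent either on the join factor (when $|F|=2$, via (i)) or on the dimension of the boundary sphere of $||F||$ (when $\IE(\CE\colon F)$ is empty); handling all three cases uniformly then closes the induction.
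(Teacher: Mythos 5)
Your proof is correct and takes essentially the same route as the paper: part (i) is read off Corollary~\ref{cor: homological connectivity bound given by a function}, and part (ii) proceeds by induction on $|\CE|$ using the decomposition $\IE(\CE-F)=\IE(\CE)\cup(||F||*\IE(\CE:F))$ with intersection $\IE(\CE)_F*\IE(\CE:F)$ and the Seifert--van Kampen theorem. Your three-way case analysis for the simple-connectedness of the intersection is in fact slightly more complete than the paper's, which only explicitly notes non-emptiness of $\IE(\CE:F)$ when $|F|=3$ and leaves the degenerate case $|F|\geq 4$ with $\IE(\CE:F)$ empty (where the intersection reduces to $\partial||F||\simeq\mathbb{S}^{|F|-2}$) implicit.
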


\begin{proof}
Assertion (i) is true, since $\connH(\IE(\CE))\geq 0$ holds by Corollary~\ref{cor: homological connectivity bound given by a function} for any hypergraph satisfying $\psi(\CE) \geq 2$.

To verify (ii), we proceed by induction on $|\CE|$. Suppose that $|\CE| = 0$. Then, $\IE(\CE)$ is a simplex, since $\psi(\CE) \geq 3$. This establishes the base case of the induction. Assume now that $|\CE| \geq 1$. We can now choose an edge $F$ satisfying $$\psi(\CE - F) \geq 3$$ and $$\psi(\CE : F) + |F| - 1 \geq 3,$$ since $\psi(\CE) \geq 3$. By induction, $\IE(\CE - F)$ is simply-connected. Note that $$\IE(\CE - F) = \IE(\CE) \cup (||F||* \IE(\CE : F)),$$ where $||F|| = \IE(\CE)_F \cup \{F\}$ and the intersection $$\IE(\CE) \cap (||F||* \IE(\CE : F)) = \IE(\CE)_F * \IE(\CE : F)$$ is simply-connected by Theorem~\ref{thm: a condition for a join to be simply-connected space}, since for the case $|F| = 2$ we have $2\leq \psi(\CE : F) \leq \connH(\IE(\CE : F)) + 2$ and whence $\IE(\CE : F)$ is path-connected, while for the case $|F| \geq 3$, the induced subcomplex $\IE(\CE)_F$ is path-connected and note that if $|F|=3$, then $\IE(\CE : F)$ is non-empty, since $\psi(\CE : F) \geq 3-|F| +1 = 1.$ The simplicial complex $\IE(\CE)$ is path-connected by Theorem~\ref{thm: path-simply connected complexes} (i), since $\psi(\CE) \geq 3$. It then follows by Van Kampen's theorem that $\pi_1(\IE(\CE - F))$ is the free product of $\pi_1(\IE(\CE))$ and $\pi_1(||F|| * \IE(\CE : F))$. This implies that $\pi_1(\IE(\CE)) = 0$.
\end{proof}

We are now ready to prove Theorem~\ref{thm: connectivity bound for any given hypergraph}.

\begin{proof}[{\bf Proof of Theorem~\ref{thm: connectivity bound for any given hypergraph}}] 
For the case $\psi(\CE) \geq 3$, the claim obviously holds by Theorem~\ref{thm: path-simply connected complexes} (ii) and Hurewicz theorem together with Corollary~\ref{cor: homological connectivity bound given by a function}. Now, suppose that $\psi(\CE) = 0$. Then, $\IE(\CE) =\{\emptyset\}$, hence $\conn(\IE(\CE)) = -2$, implying $\conn(\IE(\CE)) = \psi(\CE) -2$. Assume next that $\psi(\CE) = 1$. We conclude in this case that $\IE(\CE) \neq \{\emptyset\}$, hence $\conn(\IE(\CE)) \geq -1$. Thus, it follows that $\conn(\IE(\CE)) + 2 \geq 1 = \psi(\CE)$. Finally, suppose that $\psi(\CE) = 2$. Then, $\IE(\CE)$ is path-connected by Theorem~\ref{thm: path-simply connected complexes} (i). Hence, $\conn(\IE(\CE)) + 2 \geq 2 = \psi(\CE)$. This completes the proof.

\end{proof}

\begin{prop}\label{prop: value of the join of simplicial complexes} The equality $\psi(\CE(\IE(\CE_1) * \IE(\CE_2))) = \psi(\CE_1) + \psi(\CE_2)$ holds for any given two hypergraphs $\CE_1$ and $\CE_2$.
\end{prop}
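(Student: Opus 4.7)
The plan is to reduce the claim to a direct structural identity about the $\psi$ function applied to the disjoint union of two hypergraphs. First I would unpack $\CE(\IE(\CE_1) * \IE(\CE_2))$: the vertex set of the join is the disjoint union $V(\CE_1) \sqcup V(\CE_2)$, and a subset $S$ is a face of the join iff $S \cap V(\CE_i) \in \IE(\CE_i)$ for both $i$. Hence any minimal non-face lies entirely inside one $V(\CE_i)$ (otherwise deleting a vertex from the opposite side produces a strictly smaller non-face) and is a minimal non-face of the corresponding $\IE(\CE_i)$, namely an edge of $\CE_i$. Thus $\CE(\IE(\CE_1) * \IE(\CE_2))$ is the ``disjoint union'' hypergraph $\CE_1 \sqcup \CE_2$ on $V(\CE_1) \sqcup V(\CE_2)$ whose edge set is $\CE_1 \cup \CE_2$, and the proposition reduces to proving
\[\psi(\CE_1 \sqcup \CE_2) = \psi(\CE_1) + \psi(\CE_2),\]
where addition is extended to $\Z_{\geq} \cup \{\infty\}$ via $\infty + n = \infty$.

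I would prove this identity by induction on $|\CE_1| + |\CE_2|$. In the base case both edge sets are empty, and the argument splits according to whether $V(\CE_1)$ and $V(\CE_2)$ are empty; in every subcase the three-line defining rule for $\psi$ gives the identity at once, with both sides equal to $0$ when $V = \emptyset$ and both sides equal to $\infty$ otherwise.

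For the inductive step I would assume without loss of generality $\CE_1 \neq \emptyset$ and exploit, for every $F \in \CE_1$, the two structural identities
\[(\CE_1 \sqcup \CE_2) - F = (\CE_1 - F) \sqcup \CE_2, \qquad (\CE_1 \sqcup \CE_2) : F = (\CE_1 : F) \sqcup \CE_2.\]
The first is immediate; the second uses that $F \subseteq V(\CE_1)$ and $V(\CE_1) \cap V(\CE_2) = \emptyset$, which forces $N_{\CE_1 \sqcup \CE_2}(F) = N_{\CE_1}(F) \subseteq V(\CE_1)$ and thus determines the vertex set, while the edges of $\CE_2$ are untouched and the minimality condition defining $\CE:F$ splits between the two sides since no set contained in $V(\CE_1)$ is comparable under inclusion with any set contained in $V(\CE_2)$. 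Since $|\CE_1 - F|, |\CE_1 : F| < |\CE_1|$ by Lemma~\ref{lem: some relations for the number of minimal non-faces}, the induction hypothesis applies to both identities and gives, for each $F \in \CE_1$,
\[\min\bigl\{\psi\bigl((\CE_1 \sqcup \CE_2) - F\bigr),\ \psi\bigl((\CE_1 \sqcup \CE_2):F\bigr) + |F| - 1\bigr\} = \psi(\CE_2) + \min\bigl\{\psi(\CE_1 - F),\ \psi(\CE_1:F) + |F| - 1\bigr\}.\]
Taking the maximum over $F \in \CE_1$ yields $\psi(\CE_1) + \psi(\CE_2)$, and the symmetric computation over $F \in \CE_2$ (if $\CE_2 \neq \emptyset$) produces the same value, so the overall maximum over $F \in \CE_1 \sqcup \CE_2$ equals $\psi(\CE_1) + \psi(\CE_2)$, as required.

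The main obstacle I anticipate is the bookkeeping of degenerate cases in which one of the vertex sets is non-empty while its edge set is empty, forcing the corresponding $\psi$ to be $\infty$. These are handled cleanly by the $\infty$-arithmetic, together with the observation that isolated vertices on the ``opposite side'' are preserved by both operations $(\cdot) - F$ and $(\cdot) : F$, so the value $\infty$ propagates through the recursion whenever $\psi(\CE_1)$ or $\psi(\CE_2)$ contributes it.
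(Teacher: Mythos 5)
Your proposal is correct and follows essentially the same route as the paper: induction on the total number of edges, using that $\CE(\IE(\CE_1)*\IE(\CE_2))$ is the disjoint union $\CE_1\sqcup\CE_2$ and that the operations $-F$ and $:F$ for $F\in\CE_1$ commute with adjoining $\CE_2$. You are in fact slightly more careful than the paper at two points — the explicit treatment of the base/degenerate cases with $\infty$, and the observation that the maximum over \emph{all} edges of $\CE_1\sqcup\CE_2$ is obtained by combining the two sub-maxima over $\CE_1$ and over $\CE_2$ (the paper presents these as alternatives joined by ``or'') — but the substance is identical.
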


\begin{proof} We prove the statement by induction on $|\CE_1\cup \CE_2|$. Clearly, the base case is true. Assume that $|\CE_1\cup \CE_2|\geq 1$. Then, it follows by induction that

\begin{align*}&\psi(\CE(\IE(\CE_1) * \IE(\CE_2)))\\
&\qquad= \max_{F\in\CE_1}\{\min\{\psi(\CE(\IE(\CE_1) * \IE(\CE_2)) - F), \psi(\CE(\IE(\CE_1) * \IE(\CE_2)) : F) + |F| -1\}\}\\
&\qquad = \max_{F\in\CE_1}\{\min\{\psi(\CE(\IE(\CE_1 - F) * \IE(C_2))),\psi(\CE(\IE(\CE_1 : F) * \IE(\CE_2))) + |F| -1\}\}\\
&\qquad = \max_{F\in\CE_1}\{\min\{\psi(\CE_1 - F),\psi(\CE_1 : F) + |F|-1\}\} + \psi(\CE_2)\\
&\qquad =\psi(\CE_1) + \psi(\CE_2)
\end{align*}
or

\begin{align*}&\psi(\CE(\IE(\CE_1) * \IE(\CE_2)))\\
& \qquad = \max_{F\in\CE_2}\{\min\{\psi(\CE(\IE(\CE_1) * \IE(\CE_2)) - F),\psi(\CE(\IE(\CE_1) * \IE(\CE_2)): F) + |F| -1\}\}\\
& \qquad =\max_{F\in\CE_2}\{\min\{\psi(\CE(\IE(\CE_1) * \IE(\CE_2 - F))),\psi(\CE(\IE(\CE_1) * \IE(\CE_2 : F))) + |F| -1\}\}\\
& \qquad =\max_{F\in\CE_2}\{\min\{\psi(\CE_2 - F),\psi(\CE_2 : F) + |F| -1\}\} + \psi(\CE_1)\\
& \qquad  =\psi(\CE_1) + \psi(\CE_2).
\end{align*} This completes the proof.
\end{proof}

\begin{lem}\label{lem: a condition impliying zero sphere to be contractible in independence complex}
If $\psi(\CE)=1$ holds for a hypergraph $\CE$, then $\mathbb{S}^0$ is not contractible in $\IE(\CE)$.
\end{lem}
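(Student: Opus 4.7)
The plan is to induct on $|\CE|$, with the goal being to show that $\IE(\CE)$ has at least two path components (which is equivalent to the stated form "$\mathbb{S}^0$ is not contractible in $\IE(\CE)$"). For the base case, note that $\psi(\CE) = 1$ forces $V \neq \emptyset$ and $\CE \neq \emptyset$, so $|\CE| \geq 1$; when $|\CE| = 1$, writing $\CE = \{F\}$, one computes $\psi(\CE - F) = \infty$ and $\psi(\CE:F) = 0$ if $V = F$ while $\psi(\CE:F) = \infty$ otherwise. Thus $\psi(\CE) = 1$ forces $|F| = 2$ and $V = F$, in which case $\IE(\CE)$ is literally the pair of vertices of $F$, homeomorphic to $\mathbb{S}^0$.

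For the inductive step $|\CE| \geq 2$, the key preliminary is that $\psi(\CE') \geq 1$ for every hypergraph $\CE'$ with nonempty vertex set (by an easy structural induction), which applied to our situation gives $\psi(\CE - F) \geq 1$ for every edge $F$, while $\psi(\CE:F) + |F| - 1 \geq 0 + 1 = 1$ is automatic. Combined with $\psi(\CE) = 1$, every edge attains the recursive minimum value exactly $1$, so each edge $F$ falls into one of two (possibly overlapping) types: (A) $\psi(\CE - F) = 1$, or (B) $|F| = 2$ and $\psi(\CE:F) = 0$, equivalently $V = F \cup N_\CE(F)$. If some edge $F$ is Type (A), the inductive hypothesis applied to $\CE - F$ yields a partition $V = V_1 \sqcup V_2$ such that no cross-pair $\{v_1, v_2\}$ with $v_1 \in V_1, v_2 \in V_2$ is a face of $\IE(\CE - F)$; any such $2$-set must therefore equal an edge of $\CE - F \subseteq \CE$, so the same partition separates $V$ inside $\IE(\CE) \subseteq \IE(\CE - F)$, and this case is done.

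The main obstacle is the remaining case in which no edge is Type (A); then every edge is strictly Type (B), so $\CE$ is a graph and every edge $\{a, b\} \in \CE$ satisfies $N_{\overline{\CE}}(a) \cap N_{\overline{\CE}}(b) = \emptyset$, where $\overline{\CE}$ denotes the complement graph. The $1$-faces of $\IE(\CE)$ are precisely the $2$-subsets of $V$ not in $\CE$, so the $1$-skeleton of $\IE(\CE)$ coincides with $\overline{\CE}$ and the path components of $\IE(\CE)$ are exactly the connected components of $\overline{\CE}$. Thus it suffices to show $\overline{\CE}$ is disconnected. Pick any edge $\{u, v\} \in \CE$ and suppose, for contradiction, that a shortest $\overline{\CE}$-path $u = x_0, x_1, \ldots, x_k = v$ exists with $k \geq 2$: if $k = 2$, then $x_1 \in N_{\overline{\CE}}(u) \cap N_{\overline{\CE}}(v) = \emptyset$, a contradiction; if $k \geq 3$, then the absence of chords in a shortest path forces $\{x_0, x_2\} \notin \overline{\CE}$ and hence $\{x_0, x_2\} \in \CE$, whence $x_1 \in N_{\overline{\CE}}(x_0) \cap N_{\overline{\CE}}(x_2) = \emptyset$, again a contradiction. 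Thus $\overline{\CE}$ is disconnected, completing the proof.
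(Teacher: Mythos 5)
Your proof is correct and follows essentially the same route as the paper: induct on $|\CE|$, note that every edge attains the minimum value $1$, reduce to induction when some $\psi(\CE-F)=1$, and otherwise observe that every edge $F$ has $|F|=2$ with $V=F\cup N_{\CE}(F)$, which forces $\IE(\CE)$ to be disconnected. The only cosmetic difference is in the last step, where you argue disconnectedness via a shortest-path/chord argument in the complement graph while the paper notes directly that $\IE(\CE)$ is a disjoint union of at least two simplices; these are the same underlying observation.
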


\begin{proof}
We prove this by induction on $|\CE|$. Suppose that $\psi(\CE)=1$ holds for a hypergraph $\CE$. Then, we must have either $\psi(\CE- F) = 1$ or $\psi(\CE: F) + |F| -1 = 1$ for any edge $F$ of $\CE$. If $\psi(\CE - F) = 1$ holds for some $F\in \CE$, then, by induction, $\mathbb{S}^0$ is not contractible in $\IE(\CE - F)$. This gives that $\mathbb{S}^0$ is not contractible in $\IE(\CE)$. It is thus enough to examine the case when for every edge $F$ of $\CE$ the hypergraph $\CE: F$ satisfies $\psi(\CE: F) = 2 - |F|$. Note then that, in this case, we must have $|F| = 2$ and $\IE(\CE: F )= \{\emptyset\}$ for any edge $F$ of $\CE$. This gives that $\IE(\CE)$ is a disjoint union of simplices. Since $\IE(\CE)$ is not a simplex, $\IE(\CE)$ is not connected. This yields that $\mathbb{S}^0$ is not contractible in $\IE(\CE)$. 
\end{proof}

\begin{cor}\label{cor: not simply connected implies tight bound} If the independence complex $\IE(\CE)$ of a hypergraph $\CE$ is not simply-connected, then $\psi(\CE) = \conn(\IE(\CE)) + 2$ holds.
\end{cor}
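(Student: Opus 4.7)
The plan is to combine the lower bound $\conn(\IE(\CE)) \geq \psi(\CE) - 2$ from Theorem~\ref{thm: connectivity bound for any given hypergraph} with a matching upper bound that holds whenever $\IE(\CE)$ is not simply-connected. The key structural observation is that Theorem~\ref{thm: path-simply connected complexes}(ii) already forces $\IE(\CE)$ to be simply-connected whenever $\psi(\CE) \geq 3$. Hence the non-simply-connected hypothesis confines the argument to $\psi(\CE) \in \{0, 1, 2\}$, and the proof reduces to verifying the equality in each of these three cases.

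For $\psi(\CE) = 2$, the failure of simple-connectedness is equivalent to $\conn(\IE(\CE)) \leq 0$, and combining this with $\conn(\IE(\CE)) \geq \psi(\CE) - 2 = 0$ from Theorem~\ref{thm: connectivity bound for any given hypergraph} yields equality. For $\psi(\CE) = 1$, Lemma~\ref{lem: a condition impliying zero sphere to be contractible in independence complex} states that $\mathbb{S}^0$ is not contractible in $\IE(\CE)$, which is precisely the statement that $\IE(\CE)$ fails to be path-connected; this pushes the upper bound down to $\conn(\IE(\CE)) \leq -1$, again matching $\psi(\CE) - 2 = -1$ against the lower bound from Theorem~\ref{thm: connectivity bound for any given hypergraph}. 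For $\psi(\CE) = 0$, the recursive definition of $\psi$ forces $V = \emptyset$, so that $\IE(\CE)$ has no vertices and $\conn(\IE(\CE)) = -2$ by convention, matching $\psi(\CE) - 2$.

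There is no genuine obstacle here: each case follows by combining Theorem~\ref{thm: connectivity bound for any given hypergraph} with either Theorem~\ref{thm: path-simply connected complexes}(ii), Lemma~\ref{lem: a condition impliying zero sphere to be contractible in independence complex}, or the trivial observation about the empty vertex set. The only point requiring a moment of thought is identifying non-contractibility of $\mathbb{S}^0$ in Lemma~\ref{lem: a condition impliying zero sphere to be contractible in independence complex} with failure of path-connectedness, so that the conclusion of that lemma can be directly converted into an upper bound on $\conn(\IE(\CE))$.
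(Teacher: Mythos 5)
Your proof is correct and takes essentially the same route as the paper's: the lower bound $\conn(\IE(\CE))\geq\psi(\CE)-2$ from Theorem~\ref{thm: connectivity bound for any given hypergraph} combined with a case analysis over $\psi(\CE)\in\{0,1,2\}$ using Theorem~\ref{thm: path-simply connected complexes}, Lemma~\ref{lem: a condition impliying zero sphere to be contractible in independence complex}, and the characterization of $\psi(\CE)=0$. The only cosmetic difference is that the paper deduces $\psi(\CE)\leq 2$ directly from the lower bound together with non-simple-connectedness, whereas you deduce it from Theorem~\ref{thm: path-simply connected complexes}(ii); both are valid and lead to the same three cases.
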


\begin{proof} If $\IE(\CE)$ is not simply-connected, then $\psi(\CE) \leq \conn(\IE(\CE)) + 2 \leq 2$ holds by Theorem~\ref{thm: connectivity bound for any given hypergraph} and the result follows immediately from Theorem~\ref{thm: path-simply connected complexes} and Lemma~\ref{lem: a condition impliying zero sphere to be contractible in independence complex} and the fact that $\psi(\CE) = 0$ if and only if $\IE(\CE) = \{\emptyset\}$.
\end{proof}

Adamaszek and Barmak prove that for any $l, k \in \Z_{\geq}\cup\{\infty\}$ with $k\geq l\geq 3$, there exists a simple graph $G$ such that $\psi(G) = l$ and $\conn(\IE(G)) = k - 2$ (\cite{Adamaszekbarmak}, Proposition 5). This shows that there are counterexamples to Aharoni-Berger-Ziv conjecture. We generalize Proposition 5 in~\cite{Adamaszekbarmak} from graphs to hypergraphs.

\begin{thm}\label{thm: existenxe of a hypergraph with some certain properties}
For any $l, k \in \Z_{\geq}\cup\{\infty\}$ with $k\geq l\geq 3$, there exists a hypergraph $\CE$ satisfying $\psi(\CE) = l$ and $\conn(\IE(\CE)) = k - 2$.
\end{thm}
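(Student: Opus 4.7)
The plan is to reduce the hypergraph statement directly to the graph case, namely Proposition~5 of \cite{Adamaszekbarmak}, which already produces, for every pair $l,k\in\Z_{\geq}\cup\{\infty\}$ with $k\geq l\geq 3$, a simple graph $G$ with $\psi(G)=l$ and $\conn(\IE(G))=k-2$. The whole content of the proof is then the observation that a graph is nothing but a $2$-uniform hypergraph and that on $2$-uniform hypergraphs the recursion defining $\psi(\CE)$ in this paper specializes to the recursion defining $\psi(G)$ in Theorem~\ref{Aharoni, Berger, Ziv's proposed function gives lower bound for connectivity of independence complex of a simple graph}.

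More precisely, I would take the graph $G$ furnished by \cite[Proposition~5]{Adamaszekbarmak} and view it as the $2$-uniform hypergraph $\CE$ with $V(\CE)=V(G)$ and the same edge set. Because independent sets of $G$ are the same as independent sets of $\CE$, we have $\IE(\CE)=\IE(G)$ as simplicial complexes, so $\conn(\IE(\CE))=\conn(\IE(G))=k-2$ with no further work. It remains to check that $\psi(\CE)=\psi(G)$.

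For this I would argue by induction on $|\CE|$, matching the two recursions term by term. Every edge $F=\{a,b\}\in\CE$ has $|F|-1=1$, and $\CE-F$ is literally $G-F$. From the definition of $N_{\CE}(F)=\bigcup_{E\in\CE,\;|E\setminus F|=1} E\setminus F$ one sees that $N_{\CE}(F)$ coincides with the usual edge-neighbourhood $N(F)=(N_G(a)\cup N_G(b))\setminus F$ of the graph edge. The edges of $\CE:F$ are the minimal members of $\{E\setminus F : E\in\CE-F\}$ of cardinality at least $2$ that lie inside $V\setminus(F\cup N_{\CE}(F))$; since $|E|=2$ for every $E\in\CE-F$, the only sets $E\setminus F$ of cardinality $\geq 2$ are the edges of $G$ disjoint from $F$, and the vertex-set restriction keeps exactly those contained in $V\setminus(F\cup N(F))$. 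Hence $\CE:F$ is the hypergraph associated to the induced subgraph $G_{V\setminus(F\cup N(F))}$. Plugging these identifications into the hypergraph recursion gives
\[
\psi(\CE)=\max_{F\in\CE}\min\bigl\{\psi(\CE-F),\,\psi(\CE:F)+1\bigr\}=\max_{e\in E(G)}\min\bigl\{\psi(G-e),\,\psi(G_{V\setminus(e\cup N(e))})+1\bigr\},
\]
and by induction this equals $\psi(G)=l$, completing the proof.

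There is essentially no obstacle: the nontrivial combinatorial construction is imported as a black box from \cite{Adamaszekbarmak}, and the only thing one has to verify is the compatibility of the two definitions of $\psi$ for $2$-uniform hypergraphs, which is an unwinding of definitions. The one point that requires care is to double-check that the ``minimal sets of cardinality $\geq 2$'' clause in the definition of $\CE:F$ truly reproduces the induced subgraph on $V\setminus(F\cup N(F))$ and does not accidentally keep extra sets; the check above is immediate because for a $2$-uniform hypergraph every surviving set $E\setminus F$ already has cardinality exactly $2$.
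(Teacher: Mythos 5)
Your argument is correct for the theorem as literally stated, but it takes a genuinely different route from the paper, and a somewhat degenerate one. Since the paper's definition makes every simple graph a $2$-uniform hypergraph, and since (as you check) the hypergraph recursion for $\psi$ restricted to $2$-uniform hypergraphs reproduces the Aharoni--Berger--Ziv recursion of Theorem~\ref{Aharoni, Berger, Ziv's proposed function gives lower bound for connectivity of independence complex of a simple graph} --- the term $|F|-1$ is $1$, $N_{\CE}(F)$ is the usual edge neighbourhood, and $\CE:F$ is the induced subgraph $G_{V\setminus(e\cup N(e))}$; your worry about the ``minimal sets of cardinality at least $2$'' clause is the right one, and the identification is confirmed independently by the paper's identity $\IE(\CE:F)=\lk_{\IE(\CE-F)}(F)$ and by the paper's own remark in Section~\ref{section: generalization of Aharoni, Berger and Ziv Teorem} that the hypergraph bound ``reduces to'' the graph bound --- the graph of (\cite{Adamaszekbarmak}, Proposition 5) already witnesses the statement, and nothing more needs to be built. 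The paper instead gives a self-contained construction of genuinely non-graph examples: it takes the hypergraph of minimal non-faces of the Bj\"orner--Lutz $\Z$-acyclic non-contractible complex on $10$ vertices, adjoins a single edge $F$ of cardinality $k$ (plus auxiliary $2$-edges) so that the independence complex becomes a wedge with $\partial\Delta^{k-1}$, pins down $\psi=\conn+2=2$ via Corollary~\ref{cor: not simply connected implies tight bound}, and then joins with copies of $\mathbb{S}^0$ to raise $\psi$ to $l$ using Proposition~\ref{prop: value of the join of simplicial complexes}. What that buys is (a) examples containing an edge of arbitrary cardinality $k\geq 3$, which is presumably the point of ``generalizing'' Proposition 5 to hypergraphs rather than merely subsuming it, and (b) independence from the internals of the Adamaszek--Barmak construction, exploiting that an arbitrary complex is the independence complex of its minimal non-faces (so no barycentric-subdivision trick is needed, unlike in the flag setting). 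Your route buys brevity at the cost of producing only graphs; if you keep it, you should state the equality of the two $\psi$'s on $2$-uniform hypergraphs as an explicit lemma rather than leaving it as an aside, since it is the entire content of the argument.
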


\begin{proof}
Any hypergraph with an isolated vertex satisfies the statement for $l = \infty$. For finite values of $l$, it is enough to prove the statement for $l = 3$, since for a hypergraph $\CE$ with a finite value $l = \psi(\CE)$ and $\conn(\IE(\CE)) = k -2 \geq 1$, we have $\psi(\CE(\IE(\CE) * \IE(\CE'))) =\psi(\CE) + 1$ by Proposition~\ref{prop: value of the join of simplicial complexes} and $\conn(\IE(\CE \cup \CE')) = \conn(\IE(\CE) * \IE(\CE')) = \connH(\IE(\CE) * \IE(\CE')) = \connH(\IE(\CE)) + 1 = \conn(\IE(\CE)) + 1$, where $\CE' = \{\{a,b\}\}$ is a hypergraph on $\{a,b\}$ with $a, b\notin V(\CE)$.

We next consider the $\Z$-acyclic noncontractible complex on $10$ vertices whose facets (inclusion-wise maximal faces) is given by

\begin{align*}
   & \{1,2,4\} \hspace{5 mm} \{1,2,5\} \hspace{5 mm}\{1,3,6\}\hspace{5 mm} \{1,3,8\}\hspace{5 mm} \{1,3,10\}\hspace{5 mm} \{1,4,8\} \hspace{5 mm}\{1,4,9\}\hspace{5 mm} \{1,5,7\}\\
  & \{1,5,10\}\hspace{3 mm} \{1,6,7\}\hspace{5 mm} \{1,6,9\}\hspace{5 mm}\{2,3,5\}\hspace{5 mm} \{2,3,7\}\hspace{7 mm} \{2,3,8\}\hspace{5 mm} \{2,4,6\} \hspace{5 mm}\{2,4,10\}\\
  & \{2,6,7\} \hspace{5 mm}\{2,6,8\}\hspace{5 mm} \{2,8,10\}\hspace{3 mm} \{3,5,6\}\hspace{5 mm} \{3,5,9\}\hspace{7 mm} \{3,7,9\}\hspace{5 mm} \{3,7,10\}\hspace{3 mm} \{4,5,6\}\\
  & \{4,5,7\} \hspace{5 mm}\{4,5,8\}\hspace{5 mm} \{4,7,9\}\hspace{5 mm} \{4,7,10\} \hspace{3 mm}\{5,8,9\} \hspace{7 mm}\{5,8,10\} \hspace{3 mm} \{6,8,9\}
\end{align*} (\cite{BLutz}).
Consider now the hypergraph $\CE(\Delta)$ of minimal non-faces of the above complex $\Delta$. Let $\{F\}$ be a hypergraph on a set $F$ with $F \cap V(\CE(\Delta)) = \{z\}$ and $|F| = k \geq 3$ and consider the hypergraph $\CE^F = \CE(\Delta) \cup \{F\} \cup \CE^z$ on $V(\CE(\Delta)) \cup (F\setminus \{z\})$, where $\CE^z$ is the hypergraph given by $\CE^z = \{\{x,y\} \colon x\in V(\CE(\Delta)) \setminus \{z\}, y\in F\setminus \{z\}\}$. We have $\IE(\CE^F) = \IE(\CE(\Delta)) \vee \IE(\{F\})$. We then obtain that $\psi(\CE^F) = \conn(\IE(\CE^F)) + 2 = 2$ by Corollary~\ref{cor: not simply connected implies tight bound}, since $\IE(\CE^F)$ is not simply-connected. Note that we have $$\psi(\CE(\IE(\CE^F) * \IE(\{\{a,b\}\}))) = 2 + 1 =3,$$ by Proposition~\ref{prop: value of the join of simplicial complexes} and $$\conn(\IE(\CE^F) * \IE(\{\{a,b\}\})) = k - 2$$ for  a hypergraph $\{\{a,b\}\}$ on $\{a,b\}$ with $V(\CE^F) \cap \{a,b\} = \emptyset$.

For the case $l = 3$ and $k =\infty$, take a hypergraph $\{\{c,d\}\}$ on $\{c,d\}$ satisfying $V(\CE(\Delta)) \cap \{c,d\} = \emptyset$ and consider the complex $\IE(\CE(\Delta)) * \IE(\{\{c,d\}\})$. Note that for the hypergraph $\CE(\IE(\CE(\Delta)) * \IE(\{\{c,d\}\}))$, we have $$\psi(\CE(\IE(\CE(\Delta)) * \IE(\{\{c,d\}\}))) = 3.$$ Note also that the complex $\IE(\CE(\Delta)) * \IE(\{\{c,d\}\})$ is simply-connected by Theorem~\ref{thm: a condition for a join to be simply-connected space} and thus the complex $\IE(\CE(\Delta)) * \IE(\{\{c,d\}\})$ is contractible. This completes the proof.
\end{proof}

\begin{defn} Let $f$ be a function from the family of hypergraphs to the set $\Z_{\geq}\cup\{\infty\}$ and let $\CE$ be a hypergraph. Then, $f$ is called~\emph{inductive} on $\CE$ if the followings hold:

\begin{enumerate}[label = (\roman*)]
\item  $f(\CE) = \infty$ for $\CE = \emptyset$ and $V\neq \emptyset$.
\item $f(\CE) = 0$ whenever $V = \emptyset$.
\item If $\CE\neq \emptyset$, then $f(\CE) \leq f(\CE - K)$ and $f(\CE) \leq f(\CE : K) + |K| - 1$ hold for some edge $K$ of $\CE$ such that $f$ is inductive on $\CE - K$ and $\CE : K$.
\end{enumerate}
\end{defn}
 
For instance, $\conn (\IE(G)) + 2$ gives rise to an inductive function on the class of chordal graphs, and in particular the inequality $\conn (\IE(G)) + 2 \leq \psi (G)$ holds for every chordal graph $G$ (\cite{A}, Corollary 5.6 b)). We next generalize this latter inequality to hypergraphs as follows.

\begin{lem}\label{lem: hereditary value of a hypergraph} Let $f$ be a function from the family of hypergraphs to the set $\Z_{\geq}\cup\{\infty\}$ and let $\CE$ be a hypergraph. If $f$ is inductive on $\CE$, then $\psi (\CE) \geq f(\CE)$ holds.
\end{lem}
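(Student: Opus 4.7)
The plan is to prove $\psi(\CE) \geq f(\CE)$ by induction on $|\CE|$, exploiting the fact that the recursive definition of $\psi$ takes a maximum over all edges while the inductive property of $f$ only demands the existence of one edge $K$ where both descent inequalities hold.

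For the base cases I would handle $V = \emptyset$ and $V \neq \emptyset$, $\CE = \emptyset$ separately. In the first, both $\psi(\CE)$ and $f(\CE)$ equal $0$; in the second, both equal $\infty$, so the inequality is immediate (or in fact an equality).

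For the inductive step, assume $\CE \neq \emptyset$ (so $V \neq \emptyset$ as well). By hypothesis there is an edge $K$ of $\CE$ such that $f(\CE) \leq f(\CE - K)$ and $f(\CE) \leq f(\CE : K) + |K| - 1$, and $f$ is inductive on both $\CE - K$ and $\CE : K$. Note that $|\CE - K| = |\CE| - 1$ by Lemma 3.2(i) and $|\CE : K| \leq |\CE| - 1$ by Lemma 3.2(ii), so the inductive hypothesis applies to both, yielding $\psi(\CE - K) \geq f(\CE - K)$ and $\psi(\CE : K) \geq f(\CE : K)$. Since the recursive definition of $\psi$ takes a maximum over all edges, specializing to $F = K$ gives
\begin{align*}
\psi(\CE) &\geq \min\{\psi(\CE - K),\ \psi(\CE : K) + |K| - 1\} \\
&\geq \min\{f(\CE - K),\ f(\CE : K) + |K| - 1\} \\
&\geq \min\{f(\CE),\ f(\CE)\} = f(\CE),
\end{align*}
which closes the induction.

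There is no real obstacle in this argument; the proof is essentially a formal unwinding of the two definitions. The only point that needs care is justifying that the induction on $|\CE|$ is legitimate for the descent to $\CE : K$, which is precisely what Lemma 3.2(ii) provides.
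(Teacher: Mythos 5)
Your proof is correct and follows essentially the same route as the paper's: induction on the number of edges, specializing the maximum in the definition of $\psi$ to the edge $K$ provided by the inductive property of $f$, and chaining the resulting inequalities. Your explicit appeal to Lemma~\ref{lem: some relations for the number of minimal non-faces} to justify the descent to $\CE : K$ is a small point of care that the paper leaves implicit.
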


\begin{proof} We prove the statement by induction on the number of edges of $\CE$. Suppose that $f$ is inductive on a hypergraph $\CE$. When $\CE=\emptyset$, the claim is obviously true. Assume now that $|\CE|\geq 1$. Then, there exists an edge $K$ of $\CE$ such that $f(\CE) \leq f(\CE - K)$ and $f(\CE) \leq f(\CE : K) + |K| - 1$ hold and $f$ is inductive on the hypergraphs $\CE - K$ and $\CE : K$. Then,
\begin{align*}
f(\CE) &\leq \min\{f(\CE - K), f(\CE : K) + |K| - 1\} \\& \leq \min\{\psi(\CE - K), \psi(\CE : K) + |K| - 1\} \\& \leq \max_{F\in\CE}\{\min\{\psi(\CE - F), \psi(\CE : F) + |F| - 1\}\} \\& = \psi(\CE).
\end{align*}
This completes the proof.
\end{proof}

There are several domination related parameters for graphs. One such example is the total domination number of a graph. Recall that a subset $S\subseteq V$ of the vertex set $V$ of a graph $G$ is called a~\emph{total dominating set} of $G$ if $\bigcup_{s\in S} N(s) =V$. Moreover, the minimum cardinality of a total dominating set of $G$ is called the~\emph{total domination number} of $G$ and is denoted by $\gamma_t(G)$. Furthermore, the notion of total domination were extended to simplicial complexes (\cite{aharoni}). More precisely, Aharoni and Berger~\cite{aharoni} introduce a domination parameter $\tilde{\gamma}(\Delta)$ of a simplicial complex $\Delta$ on $V$, which is defined as the minimal size of a set $A\subseteq V$ such that $\tilde{sp}_{\Delta}(A) = V,$ where $$\tilde{sp}_{\Delta}(A) =\{v\in V\colon \textrm{there exists some face}\; \sigma\subseteq A\;\textrm{such that}\; \sigma\cup \{v\} \notin \Delta\}.$$ Clearly, when $G$ is a simple graph, then the equality $\gamma_t(G) = \tilde\gamma(\IE(G))$ holds. We reprove the following result (i) of~\cite{aharoni} whose homological version was given in~\cite{Mesh} for the graph case and prove (ii) whose homological version was given in~\cite{DaO}. 

\begin{thm}\label{relationships between invariants and connectivity numbers}
Let $\CE$ be a hypergraph on $V$.

\begin{enumerate}[label=(\roman*)]

\item\cite{aharoni} If $\hk(\CE)\in \Z_{\geq}\cup\{\infty\}$ is the number given by $\hk(\CE)= \left \lceil{\frac{\tilde{\gamma}(\IE(\CE))}{2}}\right \rceil$, then we have $\conn(\IE(\CE)) \geq \hk(\CE) -2$.

\item For the hypergraph $\CE$, we have $\conn(\IE(\CE)) \geq \epsilon(\CE) -2$. 

\end{enumerate}
\end{thm}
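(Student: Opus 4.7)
For part (i), the plan is to verify that $\hk(\CE) := \lceil \tilde{\gamma}(\IE(\CE))/2\rceil$ is inductive on every hypergraph in the sense introduced just before Lemma~\ref{lem: hereditary value of a hypergraph}, and then conclude $\conn(\IE(\CE))\geq \psi(\CE)-2\geq \hk(\CE)-2$ via that lemma together with Theorem~\ref{thm: connectivity bound for any given hypergraph}. Base-case values match automatically: $\IE(\CE)=\{\emptyset\}$ gives $\tilde{\gamma}=0$ when $V=\emptyset$, while the full simplex has $\tilde{\gamma}=\infty$ when $\CE=\emptyset$ and $V\neq\emptyset$. For the recursive step at an edge $K$, it suffices (using $\lceil|K|/2\rceil\leq |K|-1$ when $|K|\geq 2$) to show
\[
\tilde{\gamma}(\IE(\CE))\leq\tilde{\gamma}(\IE(\CE-K))\qquad\text{and}\qquad \tilde{\gamma}(\IE(\CE))\leq\tilde{\gamma}(\IE(\CE:K))+|K|.
\]
The first holds because, given a dominator $A$ of $\IE(\CE-K)$, any $\IE(\CE-K)$-witness $\sigma\subseteq A$ for a vertex $v$ with $\sigma\supseteq K$ can be refined to $E\setminus\{v\}\subseteq\sigma$ for the edge $E\in\CE-K$ lying in $\sigma\cup\{v\}$ (necessarily with $v\in E$, else $E\subseteq\sigma$ would contradict $\sigma\in\IE(\CE-K)$); the pairwise-incomparability of edges guarantees $E\setminus\{v\}\in\IE(\CE)$. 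The second follows by lifting a minimum dominator $A'$ of $\IE(\CE:K)$ to $A:=A'\cup K$: vertices in $K\cup N_\CE(K)$ are handled by appropriate proper subfaces of $K$, and for $v\in V\setminus(K\cup N_\CE(K))$ an $\IE(\CE:K)$-witness $\sigma\subseteq A'$ realizing $T\setminus K\subseteq\sigma\cup\{v\}$ for some $T\in\CE$ enriches to $\sigma\cup(T\cap K)\in\IE(\CE)$ with the same spanning property.

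For part (ii), note first that $\epsilon$ is \emph{not} inductive---in the matching $\CE=\{\{1,2\},\{3,4\},\{5,6\}\}$ one has $\psi(\CE)=2<3=\epsilon(\CE)$---so Lemma~\ref{lem: hereditary value of a hypergraph} is unavailable. Instead I would prove the homological bound $\connH(\IE(\CE))\geq\epsilon(\CE)-2$ by induction on $|\CE|$ using Theorem~\ref{thm: exact sequence of homology groups-flagization}, and upgrade to $\conn$ separately. Assume WLOG that $\CE$ has no isolated vertex (else $\IE(\CE)$ is a cone); for any edge $K$ the exact sequence yields
\[
\connH(\IE(\CE))\geq\min\bigl\{\connH(\IE(\CE-K)),\ \connH(\IE(\CE:K))+|K|-1\bigr\}.
\]
Analyze each term dichotomously. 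For the first: either $\CE-K$ has a newly isolated vertex (necessarily in $K$, whose only $\CE$-edge was $K$), making $\IE(\CE-K)$ a contractible cone; or the non-isolated vertex sets of $\CE$ and $\CE-K$ coincide, so any minimum edge-dominating family of $\CE-K$ already dominates $\CE$ and $\epsilon(\CE-K)\geq\epsilon(\CE)$. For the second: either $\CE:K$ has an isolated vertex in $V(\CE:K)$ making $\IE(\CE:K)$ a contractible cone; or every vertex of $V(\CE:K)$ is non-isolated in $\CE:K$, in which case lifting a minimum edge-dominator $\FE'$ of $\CE:K$ (choosing $T(E')\in\CE$ with $T(E')\setminus K=E'$) and adjoining $\{K\}$ produces an edge-dominator of $\CE$ of size $\leq\epsilon(\CE:K)+1\leq\epsilon(\CE:K)+|K|-1$. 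In each sub-case the induction hypothesis supplies the required bound on the corresponding term.

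To pass from $\connH$ to $\conn$, the cases $\epsilon(\CE)\leq 2$ follow directly from the homological statement (for $\epsilon(\CE)=2$, $\tilde H_0=0$ is equivalent to path-connectedness of the non-empty complex $\IE(\CE)$). For $\epsilon(\CE)\geq 3$, I would inductively show $\IE(\CE)$ is simply connected via a Van Kampen argument analogous to Theorem~\ref{thm: path-simply connected complexes}(ii): in the decomposition $\IE(\CE-K)=\IE(\CE)\cup(||K||\ast\IE(\CE:K))$ the second piece is contractible (cone on a simplex), the union is contractible or simply connected by induction, and the intersection $\IE(\CE)_K\ast\IE(\CE:K)$ is simply connected by Theorem~\ref{thm: a condition for a join to be simply-connected space}---either via $|K|\geq 3$ (so $\IE(\CE)_K=\partial\Delta^K$ is path-connected) or via $|K|=2$ combined with $\epsilon(\CE:K)\geq\epsilon(\CE)-1\geq 2$ (so $\IE(\CE:K)$ is path-connected by the parallel induction). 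Hurewicz then yields $\conn(\IE(\CE))=\connH(\IE(\CE))\geq\epsilon(\CE)-2$. The main obstacle is precisely this intertwined two-stage induction forced by the non-inductivity of $\epsilon$: the simple-connectedness argument for $\epsilon(\CE)\geq 3$ depends on path-connectedness of smaller hypergraphs, which is in turn the $\epsilon=2$ case of the very theorem being proved.
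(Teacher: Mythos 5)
Your treatment of (i) is essentially the paper's own proof: one checks that $\hk$ is inductive by establishing $\tilde{\gamma}(\IE(\CE))\leq\tilde{\gamma}(\IE(\CE-F))$ and $\tilde{\gamma}(\IE(\CE))\leq\tilde{\gamma}(\IE(\CE:F))+|F|$ and then invokes Lemma~\ref{lem: hereditary value of a hypergraph} together with Theorem~\ref{thm: connectivity bound for any given hypergraph}. The paper merely asserts the two domination inequalities; your witness-refinement argument (replacing $\sigma$ by $E\setminus\{v\}$ for the edge $E\subseteq\sigma\cup\{v\}$ with $v\in E$, which is a face by pairwise incomparability) and your lifting of a dominator of $\IE(\CE:K)$ by adjoining $K$ correctly supply the details.

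For (ii) you depart from the paper, and your suspicion of the inductive-function route is well founded: the paper's proof rests on the claim that $\epsilon(\CE-F)\geq\epsilon(\CE)$ is ``obvious,'' which is false --- for $\CE=\{\{1,2\},\{3,4\}\}$ one has $\epsilon(\CE)=2$ but $\epsilon(\CE-\{1,2\})=1$, since $1$ and $2$ become isolated --- and $\epsilon$ also violates clause (i) of the definition of inductive, as $\epsilon=0$ (not $\infty$) on an edgeless hypergraph with nonempty vertex set. However, your specific evidence is miscomputed: for $M_3=\{\{1,2\},\{3,4\},\{5,6\}\}$ one has $\psi(M_3)=3$, not $2$. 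Deleting an edge retains its vertices, so $\psi(M_3-\{1,2\})=\infty$ (the recursion bottoms out at an edgeless hypergraph on a nonempty vertex set), whence $\psi(M_3)=\min\{\infty,\psi(M_3:\{1,2\})+1\}=2+1=3=\epsilon(M_3)$; the genuine obstruction is the deletion inequality above, not a failure of $\psi\geq\epsilon$. Your replacement argument --- a direct Mayer--Vietoris induction in which the ``newly isolated vertex'' cases are disposed of as cones, followed by a Van Kampen bootstrap for $\epsilon\geq3$ --- is sound and does repair the proof; for $|K|\geq3$ one should add that $V(\CE:K)=\emptyset$ would force $\epsilon(\CE)\leq1$, so the intersection really is a join of a path-connected complex with a nonempty one. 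A lighter repair avoids your two-stage induction entirely: apply Lemma~\ref{lem: hereditary value of a hypergraph} not to $\epsilon$ but to the function $f$ defined by $f(\CE)=0$ if $V=\emptyset$, $f(\CE)=\infty$ if $\CE$ has an isolated vertex, and $f(\CE)=\epsilon(\CE)$ otherwise. Exactly the two case analyses you carry out show that $f$ is inductive, and since $f\geq\epsilon$, Theorem~\ref{thm: connectivity bound for any given hypergraph} then gives $\conn(\IE(\CE))\geq\psi(\CE)-2\geq f(\CE)-2\geq\epsilon(\CE)-2$ directly, with the passage from homological to homotopical connectivity already absorbed into that theorem.
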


\begin{proof}
  To prove (i), let $F$ be any edge of a hypergraph $\CE$. Note then that $$\hk(\CE) = \left \lceil{\frac{\tilde{\gamma}(\IE(\CE))}{2}}\right\rceil \leq \left \lceil{\frac{\tilde{\gamma}(\IE(\CE - F))}{2}}\right\rceil =\hk(\CE - F)$$ and $$\hk(\CE) = \left \lceil{\frac{\tilde{\gamma}(\IE(\CE))}{2}}\right\rceil \leq \left \lceil{\frac{\tilde{\gamma}(\IE(\CE : F)) + |F|}{2}}\right\rceil \leq \hk(\CE : F) + |F| -1.$$

Taking Lemma~\ref{lem: hereditary value of a hypergraph} and Theorem~\ref{thm: connectivity bound for any given hypergraph} into account, we conclude that $\conn(\IE(\CE)) \geq \hk(\CE)-2$.

To verify (ii), let $F$ be an edge of $\CE$. It is obvious that $\epsilon (\CE - F) \geq \epsilon (\CE)$. Assume now that $\FE_{\CE : F} \subseteq \CE : F$ is an edgewise dominant set of $\CE : F$ with $|\FE_{\CE : F}| = \epsilon (\CE : F)$. If $\FE_{\CE : F}\subseteq \CE$, then $\FE_{\CE : F} \cup \{F\}$ is an edgewise dominant set of $\CE$. Suppose now that there exists some edge $E\in \FE_{\CE : F}$ of $\CE : F$ with $E\notin \CE$. This implies that there is some subset $M\subsetneq F$ such that $M\cup E\in \CE$. We now remove $E$ from $\FE_{\CE : F}$ and instead add $E' : = M\cup E$ to the set $\FE_{\CE : F}$. Let $\FE_{\CE : F}'$ be the family obtained from $\FE_{\CE : F}$ by repeating the same procedure for all the edges $E\in \FE_{\CE : F}$ satisfying the property that $E\in \CE : F$ but $E\notin \CE$, until no such edge exists. Note then that if $|N_{\CE}(F)|=0$, then $\FE_{\CE : F}'$ is an edgewise dominant set for $\CE$ with cardinality $\epsilon (\CE : F)$ and if $|N_{\CE}(F)|\geq 1$, then $\FE_{\CE : F}'\cup\{F\}$ is an edgewise dominant set for $\CE$ with cardinality $\epsilon (\CE : F) +1$. This completes the proof.
\end{proof}

A well-known fact for the total domination number of a graph $G$ is that $\gamma_t(G) = \tilde{\gamma}(\IE(G))\geq \frac{n}{\Delta(G)}$ holds (\cite{aharoni}). We next present a lower bound for $\tilde{\gamma}(\IE(\CE))$, where $\CE$ is a hypergraph.

\begin{lem} \label{lem: bound for domination number} Let $\CE$ be a hypergraph with $n$ vertices. Then, $\tilde{\gamma}(\IE(\CE))\geq \frac{n}{\Delta(\CE)}$ holds.

\end{lem}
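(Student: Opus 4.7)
The plan is a weighted double-counting argument. Choose a minimum $A \subseteq V$ with $\tilde{sp}_{\IE(\CE)}(A) = V$, so $|A| = \tilde{\gamma}(\IE(\CE))$; if no such $A$ exists (for instance if $\CE$ has an isolated vertex) then $\tilde{\gamma}(\IE(\CE)) = \infty$ and the bound is vacuous. Unwinding $v \in \tilde{sp}_{\IE(\CE)}(A)$ for each $v \in V$ and using that $\IE(\CE)$ is closed under subsets produces an edge $E_v \in \CE$ with $v \in E_v$ and $E_v \setminus \{v\} \subseteq A$; fix such a choice once and for all.

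Next I would use a fractional assignment instead of picking a single witness. Each $v \in V$ emits one unit of weight, distributed equally as $1/(|E_v| - 1)$ to every vertex of $E_v \setminus \{v\} \subseteq A$. The total weight emitted is $n$, and rearranging the double sum yields $n = \sum_{a \in A} w(a)$ where $w(a) = \sum_{v \colon a \in E_v \setminus \{v\}} 1/(|E_v| - 1)$. The key estimate is $w(a) \leq \Delta(\CE)$ for every $a \in A$: grouping the sum defining $w(a)$ by the value of $E_v$, for a fixed edge $E \ni a$ only the $v \in E \setminus \{a\}$ can have $E_v = E$, so that edge's contribution to $w(a)$ is at most $(|E|-1)\cdot 1/(|E|-1) = 1$; summing over the at most $\Delta(\CE)$ edges through $a$ gives the claim. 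Combining, $n \leq |A|\,\Delta(\CE)$, which is the desired inequality $\tilde{\gamma}(\IE(\CE)) \geq n/\Delta(\CE)$.

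The substantive point is the choice of \emph{uniform} weights $1/(|E_v|-1)$ across $E_v \setminus \{v\}$. A naive assignment $v \mapsto a_v \in E_v \setminus \{v\}$ only yields $\tilde{\gamma}(\IE(\CE)) \geq n/(\Delta(\CE)(r-1))$, where $r = \max_{E \in \CE} |E|$, because in a hypergraph a single vertex of $A$ can be adjacent to up to $\Delta(\CE)(r-1)$ other vertices through the edges containing it. Spreading each $v$'s unit weight evenly over $E_v \setminus \{v\}$ exactly offsets the edge-size factor in the per-edge bound on $w(a)$, thereby recovering the graph-theoretic inequality $\gamma_t(G) \geq n/\Delta(G)$ in the hypergraph setting.
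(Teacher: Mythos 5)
Your proof is correct. You and the paper start identically: fix a minimum set $A$ with $\tilde{sp}_{\IE(\CE)}(A)=V$ and attach to each vertex $v$ a witnessing edge $E_v\ni v$ with $E_v\setminus\{v\}\subseteq A$; the substance of the lemma is then a double count of the resulting incidences with $A$. The difference lies in how that count is organized. The paper charges each $v$ to a single pair $(a,G)$ with $G=E_v$ and $a$ a canonical representative of $G\setminus\{v\}$ chosen by a ``first vertex in the cyclic order of $G$'' tie-break, and argues that each pair $(a,G)$ with $a\in G$ absorbs at most one vertex, giving $n\le\sum_{a\in A}d_{\CE}(a)\le|A|\,\Delta(\CE)$. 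You instead spread each $v$'s unit charge uniformly over all of $E_v\setminus\{v\}$ and note that a fixed edge $E$ through $a$ delivers at most $(|E|-1)\cdot\frac{1}{|E|-1}=1$ to $a$. Both devices serve the same purpose, namely beating the naive single-witness bound $n\le|A|\,\Delta(\CE)\,(\max_{E\in\CE}|E|-1)$, and you correctly isolate this as the crux; but your fractional version is arguably cleaner, since it needs no canonical choice and no injectivity check, whereas the paper's face $\sigma^G_a$ is not always uniquely determined (several $(|G|-1)$-element subsets of $G$ lying in $A$ can share the same first vertex), so its accounting requires some care to see that every $v$ is still covered by some pair. The only cosmetic addition worth making is the remark that $|E_v|\ge 2$ by the definition of a hypergraph, so the weights $1/(|E_v|-1)$ are well defined.
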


\begin{proof} Suppose $\CE$ is a hypergraph on $V=\{v_1,v_2,\dots,v_n\}$. Let $A\subseteq V$ be a set with minimal size satisfying $\tilde{sp}_{\IE(\CE)}(A)=V$. For each edge $G$ of $\CE$ and for each $a\in A$, let $\sigma^G_{a}\subseteq G$ be the $(|G|-2)$-face of the induced subcomplex $\IE(\CE)_G$ such that $\sigma^G_a\subseteq A$ and $a=v_i\in \sigma^G_{a}$, where $v_i$ is the first vertex in the list of vertices of $\sigma^G_{a}$. Here, the vertices of $\sigma^G_{a}$ is listed in the order induced by the cyclic order of the vertices of $G\in \CE$. Note that

  $$|\tilde{sp}_{\IE(\CE)}(A)|=|\bigcup_{a\in A}\bigcup_{G\in \CE} \{v\in V\colon \sigma^G_a\cup \{v\} =G\}|.$$

  It then follows that

  \begin{align*}  n = |\tilde{sp}_{\IE(\CE)}(A)| &= |\bigcup_{a\in A}\bigcup_{G\in \CE}\{v\in V\colon \sigma^G_a\cup \{v\} =G\}|\\
    & \leq \sum_{a\in A}\sum_{G\in \CE} |\{v\in V\colon \sigma^G_a\cup \{v\} =G\}|\\  & \leq |A|d_\CE (a) \\
    & \leq |A|\Delta(\CE) \\
  &=\tilde{\gamma}(\IE(\CE)) \Delta(\CE).
\end{align*}

Thus, we have $\tilde{\gamma}(\IE(\CE))\geq \frac{n}{\Delta(\CE)}$. 
\end{proof}

Combining Theorem~\ref{relationships between invariants and connectivity numbers} (i) with Lemma~\ref{lem: bound for domination number} gives the following homotopical connectivity result on the independence complex of a hypergraph in terms of the order and the maximal degree: $\conn(\IE(\CE))\geq \left \lfloor{\frac{n-1}{2\Delta(\CE)}-1}\right \rfloor$. We will provide an alternative proof as follows:

\begin{proof}[{\bf Proof of Corollary~\ref{cor: homotopic connectivity bound depending vertex number}}] 
  Suppose $\CE$ is a hypergraph with $n_{\CE}$ vertices. Let $f(\CE)\in \Z_{\geq}\cup\{\infty\}$ be the number given by the following:

\begin{equation*}
f(\CE)=
\begin{cases}
\infty, & \text{if $V\neq \emptyset$ and $\CE=\emptyset$},\\
\left \lfloor{\frac{n_{\CE}-1}{2\Delta(\CE)}+1}\right \rfloor, & \text{otherwise}.\\
\end{cases}
\end{equation*}

Assume that $F$ is any edge of the hypergraph $\CE$. Then, since $n_{\CE} = n_{\CE - F}$, $\Delta(\CE) \geq \Delta(\CE -F)$, $\Delta (\CE) \geq \Delta(\CE : F)$ and $|N_{\CE}(F)| \leq |F|(\Delta(\CE)-1)$, we have 

  \begin{align*} f(\CE) = \left \lfloor{\frac{n_{\CE}-1}{2\Delta(\CE)} + 1}\right \rfloor = \left \lfloor{\frac{n_{\CE - F} -1}{2\Delta(\CE)} +1} \right \rfloor \leq \left \lfloor{\frac{n_{\CE - F} -1}{2\Delta(\CE-F)} + 1}\right \rfloor = f(\CE - F)
 \end{align*}
  and

  \begin{align*}  f(\CE) = \left \lfloor{\frac{n_{\CE}-1}{2\Delta(\CE)} + 1}\right \rfloor &= \left \lfloor{\frac{n_{\CE : F} + |F| + |N_{\CE}(F)| -1}{2\Delta(\CE)} + 1}\right \rfloor \\
    & \leq \left \lfloor{\frac{n_{\CE : F} -1}{2\Delta(\CE : F)} +\frac{|F| + |N_{\CE}(F)|}{2\Delta(\CE)} +1}\right \rfloor \\
   & \leq \left \lfloor{\frac{n_{\CE : F} -1}{2\Delta(\CE : F)} +\frac{|F| + |F|(\Delta(\CE)-1)}{2\Delta(\CE)} +1}\right \rfloor \\
   & =\left \lfloor{\frac{n_{\CE : F} -1}{2\Delta(\CE : F)} +\frac{|F|}{2} +1}\right \rfloor \\
   & \leq \left \lfloor{\frac{n_{\CE : F} -1}{2\Delta(\CE : F)} +|F| -1 +1 }\right \rfloor \\
   & =  f(\CE : F) + |F| -1.
  \end{align*}
 The result then follows from Theorem~\ref{thm: connectivity bound for any given hypergraph} and Lemma~\ref{lem: hereditary value of a hypergraph}.

\end{proof}

\begin{rem}
For the particular case where $\CE$ is a graph, Corollary~\ref{cor: homotopic connectivity bound depending vertex number} reduces to the following result of Engstr\"om in~\cite{En}:  If $G$ is a graph with $n$ vertices and maximal degree $d$, then $\IE(G)$ is $\left \lfloor{\frac{n-1}{2d}-1}\right \rfloor$-connected. 
\end{rem}

\subsection{Properly-splitted Hypergraphs}
In this subsection, we introduce the class of properly-splitted hypergraphs which properly contains the class of chordal graphs. Recall that chordal graphs satisfy the Aharoni-Berger-Ziv conjecture (\cite{Kawamura}, Proposition 3.3. and~\cite{A}, Corollary 5.6). We show that, for any properly-splitted hypergraph $\CE$, we have $\psi(\CE) = \conn(\IE(\CE)) + 2$.

The following is motivated by the Proof of Proposition 3.3. in~\cite{Kawamura}.

\begin{lem}\label{lem: relationship between connectivities of CE and CE: F}
Let $\CE$ be a hypergraph on $V$ and let $F$ be an edge of $\CE$.
\begin{enumerate}[label=(\roman*)]
\item If $\connH(\IE(\CE - F)) > \connH(\IE(\CE))$, then $\connH(\IE(\CE : F))\geq \connH(\IE(\CE)) - |F| + 1$.

\item If $\connH(\IE(\CE : F)) \geq \connH(\IE(\CE)) - |F| + 1$, then $\connH(\IE(\CE -F)) \geq \connH(\IE(\CE))$.
\end{enumerate}
\end{lem}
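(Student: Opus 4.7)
The plan is to derive both (i) and (ii) directly from the long exact sequence of Theorem~\ref{thm: exact sequence of homology groups-flagization} by elementary range arguments on where homology groups vanish. Set $n := \connH(\IE(\CE))$, so that $\tilde{H}_i(\IE(\CE)) = 0$ for every $-1 \leq i \leq n$.

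For part (i), I would first translate the hypothesis $\connH(\IE(\CE - F)) > n$ into $\tilde{H}_j(\IE(\CE - F)) = 0$ for all $-1 \leq j \leq n+1$. Then, for each $i$ with $-1 \leq i \leq n - |F| + 1$, I would extract from the long exact sequence the three-term piece
$$\tilde{H}_{i+|F|}(\IE(\CE - F)) \longrightarrow \tilde{H}_{i}(\IE(\CE:F)) \longrightarrow \tilde{H}_{i+|F|-1}(\IE(\CE)).$$
The left-hand term vanishes because $i + |F| \leq n+1$, and the right-hand term vanishes because $i + |F| - 1 \leq n$; exactness then forces $\tilde{H}_i(\IE(\CE:F)) = 0$ throughout the prescribed range, yielding $\connH(\IE(\CE:F)) \geq n - |F| + 1$.

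For part (ii), I would fix $k$ with $-1 \leq k \leq n$ and isolate the complementary three-term piece
$$\tilde{H}_{k}(\IE(\CE)) \longrightarrow \tilde{H}_{k}(\IE(\CE - F)) \longrightarrow \tilde{H}_{k-|F|}(\IE(\CE:F)).$$
The left-hand term vanishes since $k \leq n$, while the hypothesis $\connH(\IE(\CE:F)) \geq n - |F| + 1$ makes the right-hand term vanish because $k - |F| \leq n - |F| + 1$. Exactness then forces $\tilde{H}_k(\IE(\CE - F)) = 0$ for every such $k$, giving $\connH(\IE(\CE - F)) \geq n$ as required.

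There is no substantive obstacle in this argument; the entire lemma reduces to choosing the right window in the Mayer--Vietoris-type sequence of Theorem~\ref{thm: exact sequence of homology groups-flagization}. The only care needed is in the degenerate cases $n = -2$ (where $\IE(\CE)$ is empty or a point and the conclusions are trivially true) and $n = \infty$ (where the bound is vacuous in (i) and follows uniformly from the same exact sequence in (ii)), so these can be dispatched in a single remark.
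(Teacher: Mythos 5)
Your proof is correct and is exactly the argument the paper intends: the paper's own proof of this lemma consists of the single sentence ``The proof is immediate by Theorem~\ref{thm: exact sequence of homology groups-flagization},'' and your two three-term windows (with the index shifts $j = i - |F|$ for (i) and $i = k$ for (ii)) are precisely the details being suppressed there. Nothing further is needed.
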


\begin{proof}
The proof is immediate by Theorem~\ref{thm: exact sequence of homology groups-flagization}.
\end{proof}

\begin{defn}
We call a hypergraph $\CE$ a \emph{properly-splitted hypergraph} if, either $\CE= \emptyset$, or $\CE$ admits an edge $F$ satisfying $\connH(\IE(\CE : F))\geq \connH(\IE(\CE)) - |F| + 1$ such that both the hypergraphs $\CE - F$ and $\CE : F$ are properly-splitted hypergraphs.
\end{defn}

\begin{thm}\label{thm: homological connectivity bound for properly-splitted hypergraphs}
If $\CE$ is a properly-splitted hypergraph, then we have $\psi(\CE) = \connH(\IE(\CE)) + 2$.
\end{thm}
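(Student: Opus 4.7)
The plan is to derive the equality as the confluence of two bounds. The inequality $\psi(\CE) \leq \connH(\IE(\CE)) + 2$ is immediately supplied by Corollary~\ref{cor: homological connectivity bound given by a function} and makes no use of the properly-splitted hypothesis, so all of the work lies in the reverse inequality $\psi(\CE) \geq \connH(\IE(\CE)) + 2$.

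For the reverse inequality, my strategy is to introduce the function $f(\CE) := \connH(\IE(\CE)) + 2$ and show that $f$ is inductive in the sense of the definition preceding Lemma~\ref{lem: hereditary value of a hypergraph} whenever $\CE$ is properly-splitted; Lemma~\ref{lem: hereditary value of a hypergraph} then gives $\psi(\CE) \geq f(\CE)$, which finishes the proof. The two base-case clauses of the definition of inductive are routine: if $\CE = \emptyset$ and $V \neq \emptyset$, then $\IE(\CE)$ is the full simplex on $V$ and hence contractible, so $f(\CE) = \infty$; if $V = \emptyset$, then $\IE(\CE) = \{\emptyset\}$ and $\connH(\IE(\CE)) = -2$, so $f(\CE) = 0$.

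The recursive clause is handled by choosing the witness edge $K$ to be the edge $F$ provided by the properly-splitted structure of $\CE$. The inequality $f(\CE) \leq f(\CE : F) + |F| - 1$ is, after rearrangement, precisely the defining condition $\connH(\IE(\CE : F)) \geq \connH(\IE(\CE)) - |F| + 1$, while the companion inequality $f(\CE) \leq f(\CE - F)$ is exactly the conclusion of Lemma~\ref{lem: relationship between connectivities of CE and CE: F}(ii) applied to that same edge. Since the properly-splitted hypothesis propagates to both $\CE - F$ and $\CE : F$ by its own recursive definition, the verification closes by induction on $|\CE|$; well-foundedness is guaranteed by $|\CE - F| = |\CE| - 1$ and $|\CE : F| \leq |\CE| - 1$, as recorded in Lemma~\ref{lem: some relations for the number of minimal non-faces}.

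I expect the only delicate point to be that the recursion in the definition of inductive must be threaded carefully alongside the recursion in the definition of properly-splitted. Once these two recursions are matched step-for-step, so that the same witness edge $F$ certifies both the properly-splitted property of $\CE$ and the inductivity of $f$ at that level, the argument reduces to applying Lemma~\ref{lem: relationship between connectivities of CE and CE: F}(ii) at each recursive step and invoking Lemma~\ref{lem: hereditary value of a hypergraph} at the end. No new homotopical input (beyond Theorem~\ref{thm: exact sequence of homology groups-flagization}, which already underlies Lemma~\ref{lem: relationship between connectivities of CE and CE: F}) is required.
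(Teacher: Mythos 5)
Your proposal is correct and follows essentially the same route as the paper: the paper likewise introduces the auxiliary function $\psi'(\DE) = \connH(\IE(\DE)) + 2$, verifies the two recursive inequalities via the defining condition of properly-splitted and Lemma~\ref{lem: relationship between connectivities of CE and CE: F}(ii), and then sandwiches $\psi(\CE)$ using Lemma~\ref{lem: hereditary value of a hypergraph} and Corollary~\ref{cor: homological connectivity bound given by a function}. Your explicit checks of the base cases and of well-foundedness are details the paper leaves implicit, but nothing substantive differs.
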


\begin{proof}

Suppose that $\CE$ is a properly-splitted hypergraph. If $\CE=\emptyset$, then the claim is obviously true. Assume now that $|\CE|\geq 1$. Then, there exists an edge $F$ of $\CE$ satisfying $\connH(\IE(\CE : F))\geq \connH(\IE(\CE)) - |F| + 1$ such that both the hypergraphs $\CE - F$ and $\CE : F$ are properly-splitted hypergraphs. Let $\psi'$ be a function from the family of hypergraphs to the set $\Z_{\geq}\cup\{\infty\}$ defined by $\psi' (\DE):= \connH(\IE(\DE)) + 2$ for each hypergraph $\DE$. This, together with Lemma~\ref{lem: relationship between connectivities of CE and CE: F} (ii), implies that $\psi'(\CE) \leq \psi'(\CE : F) + |F|-1$ and $\psi'(\CE) \leq \psi'(\CE - F)$. Hence, $\psi(\CE) \leq \connH(\IE(\CE)) +2 = \psi' (\CE) \leq \psi(\CE)$ by Corollary~\ref{cor: homological connectivity bound given by a function} and Lemma~\ref{lem: hereditary value of a hypergraph}. We therefore conclude that $\psi(\CE) = \connH(\IE(\CE)) + 2$. 
\end{proof}

\begin{thm}\label{thm: properly-splitted hypergraphs and homotopy}
If $\CE$ is a properly-splitted hypergraph, then we have $\psi(\CE) = \conn(\IE(\CE)) + 2$.
\end{thm}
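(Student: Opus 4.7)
The plan is to deduce the homotopical statement directly from the homological version established in Theorem~\ref{thm: homological connectivity bound for properly-splitted hypergraphs} by sandwiching $\psi(\CE)$ between $\connH(\IE(\CE))+2$ and $\conn(\IE(\CE))+2$.

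First, I would invoke Theorem~\ref{thm: homological connectivity bound for properly-splitted hypergraphs} to write
\[
\psi(\CE) \;=\; \connH(\IE(\CE)) + 2.
\]
Next, I would recall from Subsection~\ref{preliminaries: simplicial complex} that for every simplicial complex $\Delta$ one has $\connH(\Delta) \geq \conn(\Delta)$. Applying this to $\Delta = \IE(\CE)$ yields
\[
\psi(\CE) \;=\; \connH(\IE(\CE)) + 2 \;\geq\; \conn(\IE(\CE)) + 2.
\]
Finally, I would apply Theorem~\ref{thm: connectivity bound for any given hypergraph} in the opposite direction: $\conn(\IE(\CE)) \geq \psi(\CE) - 2$, equivalently $\conn(\IE(\CE)) + 2 \geq \psi(\CE)$. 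Chaining these two inequalities gives
\[
\psi(\CE) \;\geq\; \conn(\IE(\CE)) + 2 \;\geq\; \psi(\CE),
\]
so every term is equal and the claim follows.

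There is essentially no obstacle here: the entire content of the theorem is already packaged into Theorem~\ref{thm: homological connectivity bound for properly-splitted hypergraphs}, and what remains is the automatic upgrade from homological to homotopical connectivity, which goes through without invoking Hurewicz because Theorem~\ref{thm: connectivity bound for any given hypergraph} supplies the missing upper bound on $\psi(\CE)$ unconditionally. The only edge cases to mention are $\CE=\emptyset$ (where both sides equal $0$ and $-2$ respectively) and the $\infty$ value (handled uniformly by the conventions fixed in Section~\ref{pre}), neither of which requires a separate argument.
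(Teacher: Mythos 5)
Your argument is correct and is essentially identical to the paper's own proof: both chain $\psi(\CE) = \connH(\IE(\CE)) + 2 \geq \conn(\IE(\CE)) + 2 \geq \psi(\CE)$ using Theorem~\ref{thm: homological connectivity bound for properly-splitted hypergraphs}, the inequality $\connH \geq \conn$, and Theorem~\ref{thm: connectivity bound for any given hypergraph}. No further comment is needed.
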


\begin{proof}
Suppose $\CE$ is a hypergraph satisfying the property of being properly-splitted. By combining the fact $\connH(\IE(\CE)) \geq \conn(\IE(\CE))$ with Theorem~\ref{thm: connectivity bound for any given hypergraph} and Theorem~\ref{thm: homological connectivity bound for properly-splitted hypergraphs}, we conclude that $\psi(\CE) = \connH(\IE(\CE)) + 2 \geq \conn(\IE(\CE)) + 2 \geq \psi(\CE)$. This completes the proof.
\end{proof}

\begin{rem}
We note that a properly-splitted graph need not be a chordal graph. For example, the cycle graph on $4$ vertices $C_4$ is a properly-splitted hypergraph.
\end{rem}

The following is straightforward to prove by using a simple induction argument.
  
\begin{fact}\label{fact: disjoint union respects properly splitted} The disjoint union of two hypergraphs is properly-splitted if and only if each hypergraph is properly-splitted.
\end{fact}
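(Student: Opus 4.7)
The plan is induction on $|\CE_1| + |\CE_2|$, resting on two routine ingredients. First, since $\CE_1$ and $\CE_2$ live on disjoint vertex sets, $\IE(\CE_1 \sqcup \CE_2) = \IE(\CE_1) * \IE(\CE_2)$, so the join connectivity formula
\[
\connH(\IE(\CE_1 \sqcup \CE_2)) = \connH(\IE(\CE_1)) + \connH(\IE(\CE_2)) + 2
\]
holds (obtained by iterating Theorem~\ref{thm: join and homology}, or directly via the Künneth formula for joins, with the usual conventions $\connH(\emptyset) = -2$ and $\emptyset * X = X$). Second, for any edge $F \in \CE_1$, disjointness of the vertex sets gives
\[
(\CE_1 \sqcup \CE_2) - F = (\CE_1 - F) \sqcup \CE_2, \qquad (\CE_1 \sqcup \CE_2) : F = (\CE_1 : F) \sqcup \CE_2,
\]
directly from the definitions, since $N_{\CE_1 \sqcup \CE_2}(F) \subseteq V(\CE_1)$ and no edge of $\CE_2$ can appear in or interfere with the minimality condition defining $\CE : F$ on the $\CE_1$ side. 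Well-foundedness of the induction is guaranteed by Lemma~\ref{lem: some relations for the number of minimal non-faces}.

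For the forward implication, if $\CE := \CE_1 \sqcup \CE_2$ is properly-splitted and non-empty, take a witness edge $F$, without loss of generality $F \in \CE_1$. Substituting the two identities above into the defining inequality $\connH(\IE(\CE : F)) \geq \connH(\IE(\CE)) - |F| + 1$ and cancelling $\connH(\IE(\CE_2)) + 2$ from both sides yields $\connH(\IE(\CE_1 : F)) \geq \connH(\IE(\CE_1)) - |F| + 1$. Applying the induction hypothesis to $(\CE_1 - F) \sqcup \CE_2$ and $(\CE_1 : F) \sqcup \CE_2$ shows that $\CE_1 - F$, $\CE_1 : F$, and $\CE_2$ are each properly-splitted, so $F$ witnesses that $\CE_1$ is properly-splitted and $\CE_2$ is properly-splitted outright.

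The backward implication runs the same argument in reverse: if both factors are properly-splitted and (say) $\CE_1 \neq \emptyset$, take a witness edge $F$ for $\CE_1$ and promote it to a witness for $\CE_1 \sqcup \CE_2$, using the join formula to transfer the connectivity inequality and the induction hypothesis to upgrade each recursively properly-splitted component to its disjoint union with $\CE_2$; the case where both factors are empty is immediate. The only place requiring care is keeping the conventions $\connH(\emptyset) = -2$ and $\connH(\{\emptyset\}) = -1$ straight when a factor has no edges, but since the join identity $\connH(X * Y) = \connH(X) + \connH(Y) + 2$ holds uniformly under these conventions, the argument does not branch, and the proof is essentially just bookkeeping around the induction.
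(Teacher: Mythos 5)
Your structural reductions are all correct and are surely what the author has in mind: $\IE(\CE_1\sqcup\CE_2)=\IE(\CE_1)*\IE(\CE_2)$, $(\CE_1\sqcup\CE_2)-F=(\CE_1-F)\sqcup\CE_2$ and $(\CE_1\sqcup\CE_2):F=(\CE_1:F)\sqcup\CE_2$ for $F\in\CE_1$, and the induction is well-founded by Lemma~\ref{lem: some relations for the number of minimal non-faces}. The gap is the ``routine ingredient'' $\connH(X*Y)=\connH(X)+\connH(Y)+2$. Theorem~\ref{thm: join and homology} applies only when one factor is a sphere, so it does not iterate to arbitrary factors; and the K\"unneth formula for joins over $\Z$ gives only the inequality $\connH(X*Y)\geq\connH(X)+\connH(Y)+2$. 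Indeed, with $a=\connH(X)$ and $b=\connH(Y)$ finite, the group $\tilde{H}_{a+b+3}(X*Y)$ reduces to $\tilde{H}_{a+1}(X)\otimes\tilde{H}_{b+1}(Y)$ (the Tor contributions sit one degree higher), and this tensor product can vanish: if $X$ and $Y$ are Moore spaces with bottom homology $\Z/2$ and $\Z/3$ --- both realizable as independence complexes, since every complex is $\IE$ of its hypergraph of minimal non-faces --- then $X*Y$ is simply connected and $\Z$-acyclic, hence contractible, while your formula predicts finite connectivity. So the equality you cancel with is not available in general.

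Where this bites differs by direction. In the forward implication you use the equality on both sides of the cancellation, but there the term $\connH(\IE(\CE_1\sqcup\CE_2))$ only needs to be bounded \emph{below} by $\connH(\IE(\CE_1))+\connH(\IE(\CE_2))+2$ (which K\"unneth does give), and the equality for the smaller complex $\IE((\CE_1:F)\sqcup\CE_2)$ could be supplied by strengthening the induction hypothesis; that direction is repairable. The backward implication is the real hole: to check that a witness $F$ for $\CE_1$ remains a witness for $\CE_1\sqcup\CE_2$ you need the \emph{upper} bound $\connH(\IE(\CE_1)*\IE(\CE_2))\leq\connH(\IE(\CE_1))+\connH(\IE(\CE_2))+2$, which is exactly the direction that can fail, concerns the top-level hypergraph (so the induction hypothesis cannot supply it), and cannot be extracted from Proposition~\ref{prop: value of the join of simplicial complexes} together with Theorem~\ref{thm: homological connectivity bound for properly-splitted hypergraphs} without already knowing that $\CE_1\sqcup\CE_2$ is properly-splitted --- which is what you are proving. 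Some additional input is needed here, e.g.\ an argument that the bottom nonvanishing homology of the independence complex of a properly-splitted hypergraph survives in the join. (A minor side issue: the paper's convention makes $\connH(\{\emptyset\})=-2$, not $-1$, but this does not affect the substance.)
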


\section{Homotopy Types of Triangulated Hypergraphs}
In this section, we compute the homotopy types of independence complexes of $d$-uniform properly-connected triangulated hypergraphs and prove that a $d$-uniform properly-connected triangulated hypergraph is necessarily a properly-splitted hypergraph.

The following is implicit in~\cite{Tuyl}.

\begin{thm}\label{thm: some properties of d-uniform properly connected hypergraphs}Let $\CE$ be a $d$-uniform properly-connected hypergraph on $V$ and $F$ being an edge of $\CE$. Then, we have the following:

\begin{enumerate}[label=(\roman*)]
\item An edge $C$ of $\CE$ is an edge of $\CE : F$ if and only if $dist_{\CE}(F, C) \geq d+1$.
\item $\CE : F = \CE_{V \setminus (F\cup N_{\CE}(F))}$.
\item $\CE : F = \CE^{\geq}$.
\item $\ce_{\CE} \geq \ce_{\CE : F} + 1$, where $\ce_{\CE}$ denotes the maximum number of pairwise $(d + 1)$-disjoint edges of $\CE$.
\end{enumerate}
\end{thm}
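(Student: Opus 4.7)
The plan is to prove parts (i)--(iv) sequentially, with the relabeling Lemma~\ref{lem: d-uniform properly connected hypergraphs and relabeling} doing most of the structural work and with (ii), (iii), (iv) building on (i). For (i), I would combine the properly-connected identity $dist_{\CE}(F, C) = d - |F \cap C|$ (valid when $F \cap C \neq \emptyset$) with the explicit chain structure of Lemma~\ref{lem: d-uniform properly connected hypergraphs and relabeling}: if $dist_{\CE}(F, C) = t \leq d$, then the first edge $E_1 = \{y_1, x_2, \dots, x_d\}$ of the relabeled chain witnesses $y_1 \in N_{\CE}(F)$, and $y_1 \in E_i$ for every $i \geq 1$ puts $y_1 \in C = E_t$, so $C \cap (F \cup N_{\CE}(F)) \neq \emptyset$. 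Conversely, if $v \in C \cap N_{\CE}(F)$ via some edge $E_1$, then $dist_{\CE}(E_1, C) \leq d - 1$ (by properly-connectedness, since $v \in E_1 \cap C$), and prepending $F$ yields a chain from $F$ to $C$ of length at most $d$; the case $C \cap F \neq \emptyset$ is immediate from the distance identity.

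For (ii), I would verify both inclusions on the common vertex set $V \setminus (F \cup N_{\CE}(F))$. The inclusion $\CE_{V \setminus (F \cup N_{\CE}(F))} \subseteq \CE : F$ follows once minimality of such a $d$-element edge $E$ among $\{E' \setminus F : E' \in \CE - F\}$ of size $\geq 2$ is checked; Lemma~\ref{lem: d-uniform properly connected hypergraphs and relabeling} places a $y_1 \in N_{\CE}(F)$ inside any would-be strict subset $E' \setminus F$ with $|E' \cap F| \geq 1$, contradicting $E' \setminus F \subseteq E \subseteq V \setminus N_{\CE}(F)$. For the reverse inclusion, if $C = E \setminus F \in \CE : F$ with $|E \cap F| = k \geq 1$, the intermediate edge $E_{d-k-1}$ from the relabeling gives a strictly smaller $E_{d-k-1} \setminus F = \{y_1, \dots, y_{d-k-1}\}$ of size $d - k - 1$, violating minimality whenever $k \leq d-3$; the remaining cases $k = d-1$ (excluded by $|C| \geq 2$) and $k = d-2$ (ruled out because then $y_1 \in N_{\CE}(F)$ while $C \subseteq V \setminus N_{\CE}(F)$) close the case analysis. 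Hence $E \cap F = \emptyset$, $C = E \in \CE$, and $C \subseteq V \setminus (F \cup N_{\CE}(F))$. Part (iii) is then immediate from (i) and (ii).

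The main obstacle is (iv), since a priori distances in the subhypergraph $\CE : F$ could exceed those in $\CE$, so a pairwise $(d+1)$-disjoint family of $\CE : F$ might fail to remain pairwise $(d+1)$-disjoint when viewed in $\CE$. I would address this by showing that the $\CE$- and $\CE : F$-distances actually coincide below the threshold $d+1$ for pairs of edges in $\CE : F$. Given $G, H \in \CE : F$ with $dist_{\CE}(G, H) = t \leq d$, the chain $E_i = \{y_1, \dots, y_i, x_{i+1}, \dots, x_d\}$ from Lemma~\ref{lem: d-uniform properly connected hypergraphs and relabeling} uses only vertices from $G$ (the $x_k$'s) and $H$ (the $y_j$'s), hence is contained in $V \setminus (F \cup N_{\CE}(F))$; by (ii) each $E_i$ lies in $\CE : F$, so $dist_{\CE : F}(G, H) \leq t$. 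Contrapositively, $dist_{\CE : F}(G, H) \geq d+1$ forces $dist_{\CE}(G, H) \geq d+1$. Combining this with (i), a maximal pairwise $(d+1)$-disjoint family $\E' \subseteq \CE : F$ extends to the pairwise $(d+1)$-disjoint family $\E' \cup \{F\}$ in $\CE$, yielding $\ce_{\CE} \geq \ce_{\CE : F} + 1$.
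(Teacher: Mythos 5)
Your proposal is correct and follows essentially the same route as the paper: Lemma~\ref{lem: d-uniform properly connected hypergraphs and relabeling} supplies the chain structure showing that edges at distance at most $d$ from $F$ must meet $N_{\CE}(F)$, the contraction $\CE : F$ is identified with the induced subhypergraph on $V \setminus (F \cup N_{\CE}(F))$, and a maximum pairwise $(d+1)$-disjoint family of $\CE : F$ is extended by $F$ itself. You are in fact more explicit than the paper on two points it passes over quickly --- the minimality check behind (ii) and the comparison of $dist_{\CE}$ with $dist_{\CE : F}$ needed to justify (iv) --- which is a refinement of the same argument rather than a different one.
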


\begin{proof} Let $\CE$ be a $d$-uniform properly-connected hypergraph on $V$ and let $F$ be an edge of $\CE$. To prove (i), suppose that an edge $C$ of $\CE$ is an edge of $\CE : F$ with $dist_{\CE}(F, C) \leq d$. Then, by Lemma~\ref{lem: d-uniform properly connected hypergraphs and relabeling} there exists a proper irredundant chain $(E_0 = F, \dots, E_{dist_{\CE}(F, C)} = C)$ and some $y_1 \in N_{\CE}(F)$ satisfying $y_1\in E_i$ for all $1\leq i \leq dist_{\CE}(F, C)$. This contradicts with the assumption on $C$, since $y_1 \notin C$.

  Now suppose that $dist_{\CE}(F, C) \geq d+1$ for some edge $C$ of $\CE$. Assume for contrary that $C$ is not an edge of $\CE : F$. Note that $dist_{\CE}(F, C) \geq d + 1$ implies that $\CE \cap F = \emptyset$. Then, there must exist an edge $K$ of $\CE$ such that $F \cap K \neq \emptyset$ and $K \setminus (F \cap K)\subset C$. We therefore have that $$dist_{\CE}(F, C) \leq d- |F \cap K| + d - |K \setminus (F \cap K)|.$$ Thus $dist_{\CE}(F, C) \leq d$ holds, a contradiction.

Claim (ii) is obviously true, since any edge $C$ of $\CE$ satisfying $1\leq dist_{\CE}(F, C) \leq d$ contains some element from the set $N_{\CE}(F)$ by the argument above.

Claim (iii) is immediate from Claim (i) and (ii).

To verify (iv), let $\E'$ be a family with maximum number of pairwise $(d + 1)$-disjoint edges of $C : F$. Then, $\E'$ is also a family of pairwise $(d + 1)$-disjoint edges of $\CE$ by Claim (ii) and $dist_{\CE}(F, C) \geq d + 1$ for any edge $C \in \E'$ by Claim (i). Thus, $\E' \cup \{F\}$ is a family of pairwise $(d + 1)$-disjoint edges of $\CE$. This completes the proof.

\end{proof}

\begin{proof}[{\bf Proof of Theorem~\ref{thm: triangulatd hypergraphs and homotopy type}}] 

  Suppose that $\CE$ is a $d$-uniform properly-connected triangulated hypergraph on $V$. Let $v\in V$ be a vertex in $\CE$ such that the induced subhypergraph $\CE_{N(v)}$ is $d$-complete and $v$ appears at most twice in any proper irredundant chain in $\CE$. Set $$D_{\CE}(v): = \{I\in \IE(\CE)\colon I \cup \{v\} \in \CE\}.$$ Note that $$\IE(\CE) = \IE(\CE_{\{v\}}) * \lk_{\IE(\CE)}(v) \cup \bigcup_{I\in D_{\CE}(v)} \IE(\CE_I) * \lk_{\IE(\CE)}(I)$$ holds. Note also that $\IE(\CE_{\{v\}}) * \lk_{\IE(\CE)}(v)$ and the complexes $\IE(\CE_I) * \lk_{\IE(\CE)}(I)$ are contractible for any $I\in D_{\CE}(v)$. Moreover, for any $I_1, I_2 \in D_{\CE}(v)$, the intersection $(\IE(\CE_{I_1}) *  \lk_{\IE(\CE)}(I_1)) \cap (\IE(\CE_{I_2}) *  \lk_{\IE(\CE)}(I_2))$ is contained by $\IE(\CE_{\{v\}}) * \lk_{\IE(\CE)}(v)$. We thus have that $$\IE(\CE)\simeq \vee_{I\in D_{\CE}(v)} \Sigma (\mathbb{S}^{|I| - 2} *  \lk_{\IE(\CE)}(I))$$ by Theorem~\ref{union of complexes and homotopy}. Since $\CE_{N(v)}$ is a $d$-complete hypergraph, $\lk_{\IE(\CE)}(I) = \IE(\CE: I\cup \{v\})$ holds. Now, set $F = I\cup \{v\}$. Observe then that $F$ is an edge of $\CE$ that satisfies Claim (i). Observe also that the hypergraph $\CE: I\cup \{v\}$ is a triangulated hypergraph by Theorem~\ref{thm: triangulated hypergraphs and a special splitting edge} and Teorem~\ref{thm: some properties of d-uniform properly connected hypergraphs} (iii). Thus, we deduce, by induction, that $\lk_{\IE(\CE)}(I)$ is either contractible or is homotopy equivalent to a wedge of spheres of dimension at most $(d - 1) \ce_{\CE : I\cup\{v\}} -1$. It then follows that $\IE(\CE)$ is either contractible or is homotopy equivalent to a wedge of spheres of dimension at most $(d - 1) \ce_{\CE : I\cup\{v\}} -1 + |I|$. Theorem~\ref{thm: some properties of d-uniform properly connected hypergraphs} (iv) and the fact $|I| = d -1$ give that $\IE(\CE)$ is either contractible or is homotopy equivalent to a wedge of spheres of dimension at most $(d - 1)\ce_{\CE} - 1$. This proves Claim (ii).
\end{proof}

\begin{lem}\label{lem:a triangulated hypergraph is a properly-splitted hypergraph} Any $d$-uniform properly-connected triangulated hypergraph is a properly-splitted hypergraph.
\end{lem}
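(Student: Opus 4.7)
I would proceed by induction on $|\CE|$, with the empty hypergraph being properly-splitted by definition. For the inductive step with $|\CE| \geq 1$, the triangulated hypothesis furnishes a vertex $v \in V$ such that $\CE_{N(v)}$ is $d$-complete and $v$ appears at most twice in any proper irredundant chain of $\CE$. I would then pick any edge $F$ of $\CE$ that contains $v$ and verify that $F$ fulfills the three conditions in the definition of properly-splitted.

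The connectivity bound $\connH(\IE(\CE:F)) \geq \connH(\IE(\CE)) - |F| + 1$ would come from the wedge decomposition established in the proof of Theorem~\ref{thm: triangulatd hypergraphs and homotopy type}, namely
$$\IE(\CE) \simeq \bigvee_{I \in D_{\CE}(v)} \Sigma(\mathbb{S}^{|I|-2} * \lk_{\IE(\CE)}(I)),$$
where for $I = F \setminus \{v\}$ we have $\lk_{\IE(\CE)}(I) = \IE(\CE:F)$. Each summand has homological connectivity $|F| - 1 + \connH(\IE(\CE:F))$, obtained by combining the standard formulas for the connectivity of a suspension and a join, with the join formula sharp here because one factor is a sphere and therefore has torsion-free reduced homology concentrated in a single degree. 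Since the connectivity of a wedge is the minimum over its summands, we obtain $|F| - 1 + \connH(\IE(\CE:F)) \geq \connH(\IE(\CE))$, which is the desired inequality. For the remaining conditions, because $v \in F$, Theorem~\ref{thm: triangulated hypergraphs and a special splitting edge} guarantees that both $\CE - F$ and $\CE^{\geq} := \{G \in \CE : dist_{\CE}(F, G) \geq d+1\}$ are triangulated, and Theorem~\ref{thm: some properties of d-uniform properly connected hypergraphs}(iii) identifies $\CE^{\geq}$ with $\CE:F$. Since both $\CE - F$ and $\CE:F$ have strictly fewer edges than $\CE$ by Lemma~\ref{lem: some relations for the number of minimal non-faces}, the inductive hypothesis applies and both are properly-splitted, closing the induction.

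The main subtlety is that a single edge $F$ must simultaneously yield the connectivity bound \emph{and} make the two derived subhypergraphs recursively tractable. The resolution is to insist that $F$ contain the simplicial vertex $v$: this places $F$ inside the wedge decomposition controlling $\connH(\IE(\CE))$ and at the same time activates the splitting-edge machinery of Theorem~\ref{thm: triangulated hypergraphs and a special splitting edge}, which in turn certifies that $\CE - F$ and $\CE:F$ remain $d$-uniform properly-connected triangulated hypergraphs fit for the inductive hypothesis.
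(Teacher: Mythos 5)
Your proposal follows essentially the same route as the paper: induction on the number of edges, selecting the vertex $v$ supplied by the triangulated hypothesis, taking an edge $F\ni v$ so that Theorem~\ref{thm: triangulated hypergraphs and a special splitting edge} and Theorem~\ref{thm: some properties of d-uniform properly connected hypergraphs}(iii) make $\CE-F$ and $\CE:F$ triangulated with strictly fewer edges, and extracting the inequality $\connH(\IE(\CE:F))\geq \connH(\IE(\CE))-|F|+1$ from the wedge decomposition in the proof of Theorem~\ref{thm: triangulatd hypergraphs and homotopy type}. The only difference is that you spell out the suspension/join homology shift and the wedge-minimum argument that the paper delegates to ``see the proof of Theorem~\ref{thm: triangulatd hypergraphs and homotopy type}(i),'' which is a correct unpacking of that step.
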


\begin{proof} We prove the statement by induction on the number of edges of the hypergraph. Suppose that $\CE$ is a $d$-uniform properly-connected triangulated hypergraph on $V$. Then, there exists a vertex $v\in V$ such that $\CE_{N(v)}$ is a $d$-complete hypergraph and $v$ appears at most twice in any proper irredundant chain in $\CE$. Let $F$ be an edge of $\CE$ containing the vertex $v$. Note that the subhypergraphs $\CE - F$ and $\CE^{\geq} = \{G\in \CE\colon dist_{\CE}(F, G) \geq d+1\}$ satisfy the property of being triangulated by Theorem~\ref{thm: triangulated hypergraphs and a special splitting edge}. It follows by induction that $\CE - F$ is a properly-splitted hypergraph. By Theorem~\ref{thm: some properties of d-uniform properly connected hypergraphs} (iii) and Lemma~\ref{lem: some relations for the number of minimal non-faces} (ii), the induction gives that $C : F$ is a properly-splitted hypergraph. Moreover, we have $\connH(\IE(\CE : F))\geq \connH(\IE(\CE)) - |F| + 1$ (see the proof of Theorem~\ref{thm: triangulatd hypergraphs and homotopy type} (i)). This completes the proof.
\end{proof}

Lemma~\ref{lem:a triangulated hypergraph is a properly-splitted hypergraph} and Theorem~\ref{thm: properly-splitted hypergraphs and homotopy} immediately give the following:

\begin{cor} For a triangulated hypergraph $\CE$, we have $\conn(\IE(\CE)) = \psi(\CE) -2$.
\end{cor}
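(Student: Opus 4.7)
The plan is to derive the corollary directly from the two immediately preceding results, since it is essentially a composition of them. First, I would apply Lemma~\ref{lem:a triangulated hypergraph is a properly-splitted hypergraph} to the triangulated hypergraph $\CE$ in order to conclude that $\CE$ belongs to the class of properly-splitted hypergraphs; the lemma provides precisely this inclusion for every $d$-uniform properly-connected triangulated hypergraph.

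Having identified $\CE$ as properly-splitted, I would then invoke Theorem~\ref{thm: properly-splitted hypergraphs and homotopy}, which asserts that for any properly-splitted hypergraph $\CE$ one has $\psi(\CE) = \conn(\IE(\CE)) + 2$. A trivial rearrangement gives the desired equality $\conn(\IE(\CE)) = \psi(\CE) - 2$.

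Since the corollary is obtained by simply composing two already-established results, there is no genuine technical obstacle at this stage; all the real work has been done earlier. For the record, one could also recover the statement in two halves: the inequality $\conn(\IE(\CE)) \geq \psi(\CE) - 2$ is Theorem~\ref{thm: connectivity bound for any given hypergraph} applied to any hypergraph, while the reverse inequality follows from Theorem~\ref{thm: homological connectivity bound for properly-splitted hypergraphs} together with $\connH(\IE(\CE)) \geq \conn(\IE(\CE))$, giving $\psi(\CE) = \connH(\IE(\CE)) + 2 \geq \conn(\IE(\CE)) + 2$. Either way, the proof is a one-line invocation of the preceding lemma and theorem.
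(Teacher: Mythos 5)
Your proof is correct and follows exactly the paper's own argument: the corollary is obtained by combining Lemma~\ref{lem:a triangulated hypergraph is a properly-splitted hypergraph} with Theorem~\ref{thm: properly-splitted hypergraphs and homotopy}.
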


\section*{Acknowledgement}
I would like to thank Yusuf Civan for his invaluable comments.



\begin{thebibliography}{00}

\bibitem{A} M. Adamaszek,
\newblock{\em Splittings of independence complexes and the powers of cycles,}
\newblock{J. Combinatorial Theory, Series {\bf A}, {\bf 119:5}, (2012), 1031-1047.}

\bibitem{Adamaszekbarmak} M. Adamaszek, J.A. Barmak,
\newblock{\em On a lower bound for the connectivity of the independence complex of a graph,}
\newblock{Discrete Mathematics, {\bf 311:21}, (2011), 2566-2569.}

\bibitem{aharoniberger} R. Aharoni and E. Berger, R. Ziv,
\newblock{\em Independent systems of representatives in weighted graphs,}
\newblock{Combinatorica, {\bf 27:3}, (2007), 253-267.}

\bibitem{aharoni} R. Aharoni and E. Berger,
\newblock{\em The intersection of a matroid and a simplicial complex,}
\newblock{Transactions of the American Mathematical Society {\bf 358:11}, (2006), 4895-4917.}

\bibitem{aharonihoward} R. Aharoni, R. Holzman, D. Howard and P. Spr\"ussel,
\newblock{\em Cooperative colorings and independent systems of representatives,}
\newblock{Elect. J. Comb., {\bf 22:2}, (2015), \#P2.27.}

\bibitem{barmak} J. A. Barmak,
\newblock{\em The word problem and the Aharoni-Berger-Ziv conjecture on the connectivity of independence complexes,}
\newblock{arXiv:1009.3900, (2010).}

\bibitem{Bj} A. Bj\"orner,
\newblock{\em Topological Methods,}
\newblock{Handbook of Combinatorics, Vol. 1,2, Elsevier, Amsterdam, (1995), 1819-1872.}

\bibitem{Bj2} A. Björner, L. Lovász, S. Vrećica, R. Živaljević,
\newblock{\em Chessboard complexes and matching complexes,}
\newblock{Journal of the London Mathematical Society, {\bf 49:1}, (1994), 25-39.}

\bibitem{BLutz} A. Bj\"orner, F. H. Lutz,
\newblock{\em Simplicial manifolds, bistellar flips and a 16-vertex triangulation of the Poincaré homology 3-sphere,}
\newblock{Experimental Mathematics, {\bf 9:2}, (2000), 275-289.}

\bibitem{DaO} H. Dao and J. Schweig,
\newblock{\em Bounding the projective dimension of a squarefree monomial ideal via domination in clutters,}
\newblock{Proceedings of the American Mathematical Society, {\bf 143:2}, (2015), 555-565.}

\bibitem{Engstrom} A. Engstr\"om,
\newblock{\em A local criterion for Tverberg graphs,}
\newblock{Combinatorica, {\bf 31:3}, (2011), 321-332.}

\bibitem{En} A. Engstr\"om,
\newblock{\em Complexes of directed trees and independence complexes,}
\newblock{Discrete Mathematics, {\bf 309:10}, (2009), 3299-3309.}

\bibitem{AH} A. Hatcher,
\newblock{\em Algebraic Topology,}
\newblock{Cambridge University Press, (2002).}

\bibitem{Tuyl} H. T. H\`a and A. van Tuyl,
\newblock{\em Monomial ideals, edge ideals of hypergraphs, and their graded Betti numbers,}
\newblock{Journal of Algebraic Combinatorics, {\bf 27}, (2008), 215-245.}

\bibitem{Tuyl2} H. T. H\`a and A. van Tuyl,
\newblock{\em  Correction to: Monomial ideals, edge ideals of hypergraphs, and their graded Betti numbers [Journal of Algebraic Combinatorics 27 (2008), NO. 2, 215-245],}
\newblock{Journal of Algebraic Combinatorics, {\bf 58}, (2023), 325-328.}

\bibitem{Kawamura} K. Kawamura,
\newblock{\em Independence complexes of chordal graphs,}
\newblock{Discrete Mathematics, {\bf 310(15-16)}, (2010), 2204-2211.}

\bibitem{Lovasz} L. Lov\'asz,
\newblock{\em Kneser's conjecture, chromatic number, and homotopy,}
\newblock{Journal of Combinatorial Theory, Series A, {\bf 25(3)}, (1978), 319-324.}

\bibitem{Mesh} R. Meshulam,
\newblock{\em Domination Numbers and Homology,}
\newblock{Journal of Combinatorial
Theory Series A, {\bf 102:2}, (2003), 321–330.}

\bibitem{Mesh2} R. Meshulam,
\newblock{\em The clique complex and hypergraph matching,}
\newblock{Combinatorica, {\bf 21:1}, (2001), 89-94.}

\bibitem{Milnor} J. Milnor,
\newblock{\em Construction of Universal Bundles, II,}
\newblock{Annals of Mathematics Second Series, {\bf 63:1}, (1956), 430-436.}

\bibitem{Munkres} J. R. Munkres,
\newblock{\em Elements of Algebraic Topology,}
\newblock{Addison-Wesley Publishing Company, (1984)}

\end{thebibliography}
\end{document}